\definecolor{internalLink}{rgb}{0.5,0,0}
\definecolor{citeLink}{rgb}{0,0.5,0}
\definecolor{urlLink}{rgb}{0,0,0.5}
\theoremstyle{plain}
\newtheorem{theorem}{Theorem}[section]
\newtheorem{lemma}[theorem]{Lemma}
\newtheorem{conjecture}[theorem]{Conjecture}
\newtheorem{proposition}[theorem]{Proposition}
\newtheorem{corollary}[theorem]{Corollary}
\newtheorem*{FirstResult}{Theorem~\ref{thm:Hn-for-f-spindle-finite}}
\newtheorem*{SecondResult}{Theorem~\ref{thm:all-finite-groups}}
\theoremstyle{definition}
\newtheorem{definition}[theorem]{Definition}
\newtheorem{example}[theorem]{Example}
\theoremstyle{remark}
\newtheorem{remark}[theorem]{Remark}
\def\DeclareMathSymbol#1#2{%
	\newcommand#1{\mathrm{#2}}%
}
\DeclareMathOperator{\Z}{\mathbb{Z}}				
\DeclareMathOperator{\Q}{\mathbb{Q}}				
\DeclareMathSymbol{\id}{id}		
\DeclareMathSymbol{\rk}{rk}		
\DeclareMathOperator{\im}{im}		
\DeclareMathOperator{\coker}{coker}	
\DeclareMathSymbol{\cone}{cone}		
\let\@osq[%
\let\@csq]%
\def\Cbefore#1#2#3{\@ifnextchar\@osq{\Cbefore@param{#1}{#2}}{\Cbefore@param{#1}{#2}[#3]}}%
\def\Cbefore@param#1#2[#3]{{}^{#3}{\Cmodifier{#1}}^{#2}}
\def\Cafter#1#2#3{\@ifnextchar\@osq{\Cafter@param{#1}{#2}}{\Cafter@param{#1}{#2}[#3]}}%
\def\Cafter@param#1#2[#3]{{\Cmodifier{#1}}^{#2,#3}}
\def\Cmodifier@reset{\let\Cmodifier\relax}
\def\Cmodifier@set#1{\def\Cmodifier##1{#1##1\Cmodifier@reset}}
\def\Caugm{\Cmodifier@set\widetilde}
\def\Chat {\Cmodifier@set\widehat}
\newcommand{\CN}{\Cmodifier{C}^{N\!}}			
\newcommand{\CD}{\Cmodifier{C}^{D\!}}			
\newcommand{\CNxy}{\Cmodifier{C}^{N\!N\!}}	
\newcommand{\CNxx}{\Cmodifier{C}^{N\!D\!}}	
\newcommand{\Cb}{\Cafter{C}{N}{b}}		
\newcommand{\HN}{\Cmodifier{H}^{N\!}}			
\newcommand{\HD}{\Cmodifier{H}^{D\!}}			
\newcommand{\HNxy}{\Cmodifier{H}^{N\!N\!}}	
\newcommand{\Hb}{\Cafter{H}{N}{b}}		
\def\basecycle{\underline{\mathfrak{c}}}
\let\diff\partial
\DeclareMathOperator{\opn}{\star}			
\newcommand{\Orb}[1]{\bar #1}
\DeclareMathSymbol{\orb}{orb}
\DeclareMathSymbol{\init}{init}
\begin{document}

\title{Torsion in one-term distributive homology}

\author[Alissa S. Crans]{Alissa S. Crans}
\address{Department of Mathematics, Loyola Marymount University \\ Los Angeles, CA 90045}
\email{acrans@lmu.edu}

\author[J\'ozef H. Przytycki]{J\'ozef H. Przytycki}
\thanks{%
	JHP was partially supported by the NSA grant H98230-11-1-0175, by a~GWU-REF grant and by a~grant
	co-financed by the European Union (European Social Fund) and Greek national funds through
	the~Operational Program ``Education and Lifelong Learning'' of the~National Strategic Reference
	Framework (NSRF) --- Research Funding Program: ``Thales. Reinforcement of the~interdisciplinary
	and/or inter-institutional research and innovation.''
}
\address{%
	Department of Mathematics, George Washington University\\ Washington, DC 20052\\
	and Institute of Mathematics, University of Gda\'nsk, Poland
}
\email{przytyck@gwu.edu}

\author[Krzysztof K. Putyra]{Krzysztof K. Putyra}
\thanks{%
	KKP was partially supported by the~Columbia University topology RTG grant DMS-0739392.
}
\address{Department of Mathematics, Columbia University\\ New York, NY 10027}
\email{putyra@math.columbia.edu}

\begin{abstract}
	The one-term distributive homology was introduced in \cite{Prz-distr-survey} as an~atomic
	replacement of rack and quandle homology, which was first introduced and developed by
	Fenn-Rourke-Sanderson \cite{FennRourkeSand} and Carter-Kamada-Saito \cite{CarterKamadaSaito}.
	This homology was initially suspected to be torsion-free \cite{Prz-distr-survey}, but we show
	in this paper that the~one-term homology of a~finite spindle can have torsion. We carefully
	analyze spindles of block decomposition of type $(n,1)$ and introduce various techniques to
	compute their homology precisely. In addition, we show that any finite group can appear as
	the~torsion subgroup of the~first homology of some finite spindle. Finally, we show that if
	a~shelf satisfies a~certain, rather general, condition then the one-term homology is trivial
	--- this answers a~conjecture from \cite{Prz-distr-survey} affirmatively.
\end{abstract}

\maketitle

\setcounter{tocdepth}{1}
\tableofcontents

\section{Introduction}\label{sec:intro}
For any set $X$, we can consider colorings of arcs of a~link diagram by elements of $X$. Motivated by
a~Wirtingen presentation of the~fundamental group of a~link complement, we may assume that
overcrossings preserve colors while undercrossings change them in a~way described by some binary
operation $\opn\colon X\times X\to X$, as shown in Fig.~\ref{fig:coloring}.

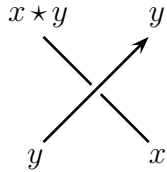
\begin{figure}[ht]
	\begin{center}
		\psset{unit=0.7cm}
		\begin{pspicture}(0,-0.2)(2,2.2)
			\psline[linewidth=1pt](2,0)(0,2)
			\psline[linewidth=5pt,linecolor=white](0,0)(2,2)
			\psline[linewidth=1pt,arrowsize=5pt]{->}(0,0)(2,2)
			\rput[Bl](2,-8pt){$x$}\rput[B ](-2pt,2.3){$x\opn y$}
			\rput[Br](0,-8pt){$y$}\rput[Bl](2,2.3){$y$}
		\end{pspicture}
	\end{center}
	\caption{Propagation of colors at a~crossing}\label{fig:coloring}
\end{figure}

The~requirement that the~Reidemeister moves change the~coloring only locally results in~several conditions
on $(X,\opn)$, making it a~quandle \cite{Joyce-quandle} or a~rack \cite{FennRourke-rack-first}.
However, the~most important is the~third Reidemeister move, visualized in Fig.~\ref{fig:R-3}, because
of its close connection to the~Yang-Baxter equation \cite{CES, Eis, Prz-distr-survey}. This requires $\opn$ to be
distributive, i.e.~$(x\opn y)\opn z = (x\opn z)\opn (y\opn z)$, and pairs $(X,\opn)$ satisfying this
condition are called \emph{shelves}. If $\opn$ is also idempotent, i.e. $x\opn x=x$, $(X,\opn)$ is
a~\emph{spindle} \cite{Crans-2-algebras}.

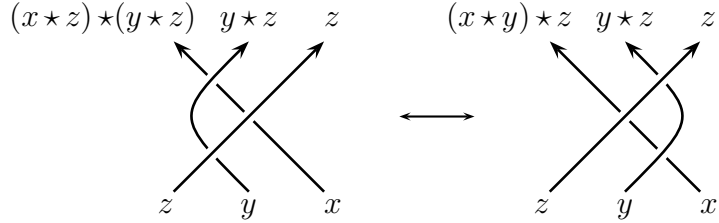
\begin{figure}
	\begin{center}
		\begin{pspicture}(0,-0.1)(7,2.4)
			\rput[bl](0,0){
				\psline[linewidth=1pt,arrowsize=5pt]{->}(2,0)(0,2)
				\psbezier[linewidth=5pt,linecolor=white](1,0)(0,1)(0,1)(1,2)
				\psbezier[linewidth=1pt,arrowsize=5pt]{->}(1,0)(0,1)(0,1)(1,2)
				\psline[linewidth=5pt,linecolor=white](0,0)(2,2)
				\psline[linewidth=1pt,arrowsize=5pt]{->}(0,0)(2,2)
				\rput[Bl](2,-8pt){$x$}\rput[Br](0.3,2.2){$(x\opn z)\opn(y\opn z)$}
				\rput[B ](1,-8pt){$y$}\rput[B ](1.0,2.2){$\phantom{(}y\opn z$\phantom{)}}
				\rput[Br](0,-8pt){$z$}\rput[Bl](2.0,2.2){$z$}
			}
			\psline{<->}(3,1)(4,1)
			\rput[bl](5,0){
				\psline[linewidth=1pt,arrowsize=5pt]{->}(2,0)(0,2)
				\psbezier[linewidth=5pt,linecolor=white](1,0)(2,1)(2,1)(1,2)
				\psbezier[linewidth=1pt,arrowsize=5pt]{->}(1,0)(2,1)(2,1)(1,2)
				\psline[linewidth=5pt,linecolor=white](0,0)(2,2)
				\psline[linewidth=1pt,arrowsize=5pt]{->}(0,0)(2,2)
				\rput[Bl](2,-8pt){$x$}\rput[Br](0.3,2.2){$(x\opn y)\opn z$}
				\rput[B ](1,-8pt){$y$}\rput[B ](1.0,2.2){$\phantom{(}y\opn z$\phantom{)}}
				\rput[Br](0,-8pt){$z$}\rput[Bl](2.0,2.2){$z$}
			}
		\end{pspicture}
	\end{center}
	\caption{Third Reidemeister move forces $\opn$ to be distributive}\label{fig:R-3}
\end{figure}

Link invariants come not only from counting colorings by rack or quandles, but also from their
homologies, see \cite{Qndl-cocycle-invs, Qndl-homology}. We noticed in \cite{Prz-distr-survey, 
PrzSik-distr-hom} that homology groups can be defined similarly for any~shelf or spindle. Even 
more, there is a~chain complex with a~simpler differential, called a~\emph{one-term distributive
chain complex} $C^{\opn}(X)$ (see Section~\ref{sec:definitions} for a~definition). We showed in
\cite{Prz-distr-survey,PrzSik-distr-hom} that if $(X,\opn)$ is a~rack, then $C^{\opn}(X)$ is acyclic.
More generally, to force $C^{\opn}(X)$ to be acyclic it is enough to have just one element $y\in X$ such that $x\mapsto x\opn y$ is
a~bijection. This is perhaps the~reason why this homology has
never been examined before. At first, one would be tempted to suspect that $H^{\opn}(X)$ is always trivial,
but we quickly computed the homology for a~right trivial shelf $(X, \dashv)$, where $a\dashv y=y$,
and found it to be a~large free group \cite{PrzSik-distr-hom}. For a~while all one-term homology we
computed was free; only in February of 2012 did we find two four-element spindles with torsion
in homology. More precisely, our examples are given by the~following tables:
\begin{center}
	\vskip 0.5\baselineskip
	\begin{tabular}{c|cccc}
		\raisebox{1pt}{$\opn_1$} & 1 & 2 & 3 & 4 \\
		\hline
		1 & 1 & 2 & 3 & 4 \\
		2 & 1 & 2 & 3 & 4 \\
		3 & 1 & 2 & 3 & 4 \\
		4 & 2 & 1 & 1 & 4
	\end{tabular}
	\qquad
	\begin{tabular}{c|cccc}
		\raisebox{1pt}{$\opn_2$} & 1 & 2 & 3 & 4 \\
		\hline
		1 & 1 & 2 & 4 & 3 \\
		2 & 1 & 2 & 4 & 3 \\
		3 & 2 & 1 & 3 & 4 \\
		4 & 2 & 1 & 3 & 4
	\end{tabular}
	\vskip 0.5\baselineskip
\end{center}
Using \texttt{Mathematica}, we found that the~first homology for both spindles has
$\Z_2$-torsion. Namely, we obtained the~following groups:
\begin{align*}
	H_0^{\opn_1}(X) &= \Z^2,							& H_0^{\opn_2}(X) &= \Z^2,							\\
	H_1^{\opn_1}(X) &= \Z^2\oplus\Z_2,		& H_1^{\opn_2}(X) &= \Z^2\oplus\Z_2^4,	\\
	H_2^{\opn_1}(X) &= \Z^8\oplus\Z_2^4,	& H_2^{\opn_2}(X) &= \Z^8\oplus\Z_2^{12}.
\end{align*}
In this paper, we compute the~homology of the~first spindle and, more generally, of other \hbox{$f$-\emph{spindles}},
which are spindles given by a~function $f\colon X_0\to X_0$ where $X=X_0\sqcup\{b\}$ and $x\opn y=y$,
unless $x=b$, in which case $b\opn y = f(y)$ (see Definition~\ref{def:f-spindle}). This family of
spindles was introduced in~\cite{PrzSik-distr-hom}. If $X$ is finite, we prove in Section~\ref{sec:homology}
the~following formulas for normalized homology (see Section~\ref{sec:definitions} for a~definition
of a~normalized complex):

\begin{FirstResult}
	Assume $X$ is a~finite $f$-spindle. Then its homology is given by the~formulas
	\begin{equation*}
	\setlength\arraycolsep{2pt}
	\left\{\begin{array}{rl}
			\Caugm\HN_0(X) &= \Z^{\orb(f)},\\
						\HN_1(X) &= \Z^{(\orb(f)-1)|X_0|+2\orb(f)} \oplus
												\Z_\ell^{\init(f)}, \phantom{\Big|^{(|X|-1)^n}}\\
						\HN_n(X) &= \left(\Z^{(\orb(f)-1)|X|^2+|X|} \oplus
												\Z_\ell^{\init(f)|X|}\right)^{\oplus(|X|-1)^{n-2}},\quad\textrm{for }n\geqslant 2.
	\end{array}\right.
	\end{equation*}
	In particular, $\HN_{n+1}(X) = \HN_n(X)^{\oplus(|X|-1)}$ for $n\geqslant 2$.
\end{FirstResult}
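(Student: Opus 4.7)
The plan is to exploit the very explicit structure of the differential on an $f$-spindle: because $x \opn y = y$ unless $x = b$, most terms of the one-term differential collapse to projections, and the genuine action by $f$ appears only when $b$ sits in a left-argument position. This makes the normalized complex $\CN(X)$ amenable to a filtration by the occurrences of $b$ in each basis tuple.

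First I would write out the normalized differential explicitly and split $\CN(X)$ as a short exact sequence $0\to A\to \CN(X)\to B\to 0$, where $A$ is the subcomplex spanned by tuples with no $b$ and $B$ is the quotient consisting of tuples containing at least one $b$. Because the operation restricted to $X_0$ is right-trivial, $A$ coincides with the one-term chain complex of the right-trivial shelf on $X_0$, whose homology is already known from \cite{PrzSik-distr-hom} and accounts for a known portion of the free rank in the final formulas.

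Next I would filter $B$ further by the position of the first (or last) occurrence of $b$ and show that, on each associated graded piece, the differential contains the operator $f_{\ast} - \id$ acting on a tensor factor isomorphic to $\Z X_0$. The cokernel of $f_{\ast} - \id$ splits as $\Z^{\orb(f)}\oplus \Z_\ell^{\init(f)}$: each orbit of $f$ contributes one free summand supported on its periodic cycle, while each preperiodic ``initial'' element of $X_0$ contributes one cyclic summand of order $\ell$ determined by the cycle it eventually feeds into. Passing to the long exact sequence in homology for the pair and tracking these free and torsion contributions should yield the formulas for $\HN_0$, $\HN_1$, and $\HN_n$ in general.

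For the stabilization $\HN_{n+1}(X) = \HN_n(X)^{\oplus(|X|-1)}$ I would construct an explicit suspension-type chain map that appends a fresh non-degenerate entry (producing the factor $|X|-1$ in rank) and verify that it is a quasi-isomorphism once $n \geqslant 2$; the low-degree anomalies at $n = 0,1$ are precisely what distinguish the first two formulas from the general pattern. The principal obstacle will be the torsion bookkeeping: showing that the torsion assembles into exactly $\init(f)$ copies of the single cyclic group $\Z_\ell$ (rather than a mixed sum of cyclic groups indexed by individual cycles of $f$) requires a careful argument identifying $\ell$ uniquely from the orbit structure, together with getting signs and exponents right so that the free ranks match $(\orb(f)-1)|X_0|+2\orb(f)$ in degree one and $(\orb(f)-1)|X|^2+|X|$ in higher degrees.
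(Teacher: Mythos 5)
Your decomposition is genuinely different from the paper's, but it contains a concrete error that undermines the torsion computation. The paper splits $\CN(X)$ not by the presence of $b$ but by whether the first two entries $x_0,x_1$ lie in the same orbit: the subcomplex $\CNxx(X)$ of such sequences has \emph{trivial} differential, while the quotient $\CNxy(X)$ has only $\diff=d^0-d^1$. Both pieces are then computable by hand, and $\HN_n(X)\cong\ker\delta_n\oplus\coker\delta_{n+1}$ where $\delta$ is the connecting map; the torsion $\Z_\ell^{\init(f)}$ arises from $\coker\delta$ via relations $k\cdot(f(x),x,\ldots)\equiv 0$ coming from sums over cycles of length $k$, combined across \emph{all} cycles of $\Gamma_f$ to produce the gcd $\ell$, and further relations that restrict the surviving generators to $(f(y),y,\ldots)$ and $(f^2(y),f(y),y,\ldots)$ with $y$ initial.

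The gap in your argument is the assertion that $\coker(f_*-\id)$ on $\Z X_0$ equals $\Z^{\orb(f)}\oplus\Z_\ell^{\init(f)}$. This is false: imposing $x\equiv f(x)$ identifies all elements of an orbit to a single free generator, so $\coker(f_*-\id)\cong\Z^{\orb(f)}$ with no torsion at all. The $\Z_\ell$ torsion in the theorem does not live in any single tensor factor; it emerges only from the interaction of cycle-sums of several different lengths inside the connecting homomorphism of a long exact sequence, which is exactly the delicate bookkeeping you flag at the end as an ``obstacle'' but never carry out. Your filtration by position of $b$ also does not obviously produce tractable subquotients: the subcomplex $A$ of $b$-free tuples does have trivial differential (good), but its rank $(|X|-1)(|X|-2)^n$ does not align with any visible summand of the answer, so tracking the long exact sequence of the pair $(\CN(X),A)$ would still require an independent computation of $H_*(B)$ that your outline leaves open. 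You would likely end up rediscovering some version of the same-orbit decomposition to make progress. In short: the skeleton (filter, long exact sequence, isolate an operator involving $f$) is reasonable, but the identification of the torsion mechanism is wrong, and the heart of the proof is precisely the part you defer.
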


Here, $\orb(f)$ and $\init(f)$ stand, respectively, for the~number of orbits of $f$ and the~number
of elements that are not in the~image of $f$. This shows that any power of a~cyclic group can
appear as the~torsion subgroup of $H_1(X)$ for some spindle. The~other finite abelian groups are
realized by \emph{block spindles}, defined in Section~\ref{sec:odds-ends}. The~idea is that we take
several blocks $X_i$ and a~function $f_i\colon X_i\to X_i$ for each of them, and we take as $X$
their disjoint sum together with a~one-element block $\{b\}$. Then each $X_i^+ := X_i\sqcup\{b\}$
is a~subspindle, which contributes some torsion to $H_1(X)$. We show that, in fact, there is no
more torsion.

\begin{SecondResult}
	Assume a~block spindle $X$ has a~one-element block $\{b\}$. Then
	\begin{equation*}
		H_1(X) \cong F\oplus\bigoplus_{i\in I}H_1(X_i^+),
	\end{equation*}
	where $F$ is a~free abelian group of rank $\sum_{i\neq j} orb(f_i)|X_j| $. In~particular, every
	finite abelian group can be realized as the~torsion subgroup of $H_1(X)$ for some spindle $X$.
\end{SecondResult}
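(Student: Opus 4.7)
The plan is to reduce to Theorem~\ref{thm:Hn-for-f-spindle-finite}, which describes $H_1$ of an~$f$-spindle. Under the natural definition of a~block spindle---where elements of distinct blocks act trivially on each other, i.e.~$x\opn y=y$ whenever $x\neq b$ and $x,y$ lie in different blocks---the whole spindle $X$ becomes an~$f$-spindle with $X_0 = \bigsqcup_i X_i$ and $f = \bigsqcup_i f_i$ extended by $f(b) = b$. Theorem~\ref{thm:Hn-for-f-spindle-finite} then applies directly to both $X$ and to each subspindle $X_i^+$, yielding explicit descriptions of $H_1(X)$ and $H_1(X_i^+)$.

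I would first check that the free ranks agree. With $|X_0| = \sum_j |X_j|$ and $\orb(f) = \sum_i \orb(f_i)$, a~short computation yields
\begin{align*}
\sum_{i\neq j} \orb(f_i)|X_j| + \sum_i\left[(\orb(f_i)-1)|X_i| + 2\orb(f_i)\right]
 &= \orb(f)|X_0| - |X_0| + 2\orb(f),
\end{align*}
which matches the free rank $(\orb(f)-1)|X_0| + 2\orb(f)$ that Theorem~\ref{thm:Hn-for-f-spindle-finite} predicts for $H_1(X)$. For the torsion part, the key observation is that the cyclic factors $\Z_\ell$ appearing in Theorem~\ref{thm:Hn-for-f-spindle-finite} are determined orbit-by-orbit: each initial element $y\notin\im f$ contributes its own cyclic summand whose order depends only on the $f$-orbit of $y$ (one can verify this by specializing Theorem~\ref{thm:Hn-for-f-spindle-finite} to the~example $\opn_1$, where a~single orbit of length $2$ gives precisely one $\Z_2$-summand). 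Since the orbits of $f=\bigsqcup f_i$ are exactly the orbits of the individual $f_i$'s---no orbit crosses between blocks---the torsion subgroup of $H_1(X)$ splits as $\bigoplus_i T_i$, where $T_i$ is the torsion subgroup of $H_1(X_i^+)$, completing the abstract isomorphism.

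The main obstacle will be confirming the orbit-wise structure of the torsion, which requires re-examining the proof of Theorem~\ref{thm:Hn-for-f-spindle-finite} to verify that each torsion generator of $H_1$ localizes to a~single $f$-orbit, so that extending the $f_i$'s to $f=\bigsqcup f_i$ does not mix cyclic factors coming from different blocks; this is what justifies reading the notation $\Z_\ell^{\init(f)}$ as a~direct sum indexed by initial elements. Once the decomposition is established, the realization statement follows immediately: given any finite abelian group $A \cong \bigoplus_k \Z_{n_k}$, Theorem~\ref{thm:Hn-for-f-spindle-finite} produces for each $k$ an~$f_k$-spindle $X_k^+$ (for instance, with $f_k$ acting on an~$n_k$-element set by a~single orbit of period $n_k$ and exactly one initial element, mimicking the pattern of $\opn_1$) for which $H_1(X_k^+)$ has torsion subgroup $\Z_{n_k}$; assembling the $X_k$ together with $\{b\}$ into a~block spindle $X$ then yields $H_1(X)_{\mathrm{tors}} \cong A$.
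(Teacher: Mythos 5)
Your proposal rests on the claim that the whole block spindle $X$ is itself an~$f$-spindle, with $f=\bigsqcup_i f_i$. This misreads Definition~\ref{def:multi-f-spindle}: for $x\in X_i$ and $y\in X_j$ with $i\neq j$ (and neither equal to $b$), the operation is $x\opn y = f_j(y)$, \emph{not} $y$. An~$f$-spindle requires $x\opn y = y$ whenever $x\neq b$, so $X$ is an~$f$-spindle only in the degenerate case of exactly two blocks (i.e.\ $X=X_i^+$). With three or more blocks you cannot invoke Theorem~\ref{thm:Hn-for-f-spindle-finite} to describe $H_1(X)$, which invalidates the core of the argument. The fact that your rank computation comes out right is an arithmetical coincidence, not evidence that the reduction is legitimate.

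The torsion step fails even more starkly, and you cannot patch it by ``re-examining the proof'' as you suggest. In Theorem~\ref{thm:Hn-for-f-spindle-finite} the torsion is $\Z_\ell^{\init(f)}$ where $\ell$ is the~\emph{greatest common divisor of all cycle lengths in $\Gamma_f$} --- a global invariant, not an orbit-local one. Your claim that ``each initial element $y\notin\im f$ contributes its own cyclic summand whose order depends only on the $f$-orbit of $y$'' is false for $f$-spindles; the relation \eqref{eq:Cbxx-rel} lets any cycle in $\Gamma_f$ act on any generator $(f(y),y)$. So if you could treat a~block spindle with, say, a~$2$-cycle in block $X_1$ and a~$3$-cycle in block $X_2$ as a~single $f$-spindle, the theorem would give $\ell=\gcd(2,3)=1$ and hence no torsion at all, whereas Theorem~\ref{thm:all-finite-groups} asserts the torsion is $\Z_2\oplus\Z_3$. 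The fact that the block-spindle structure protects each block's torsion from the others is precisely the content of the theorem, and it is exactly what an~$f$-spindle does \emph{not} do.

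The paper's proof instead works with reduced homology $H_1(X)=H_1(X,b)$ and observes that each $C(X_i^+,b)$ is a~direct summand of $C(X,b)$ (via the retraction collapsing all other blocks to $b$), and that these summands share no generators. This yields a~direct decomposition $C(X,b)\cong Q\oplus\bigoplus_i C(X_i^+,b)$. Theorem~\ref{thm:Hn-for-f-spindle-finite} (or rather Theorem~\ref{thm:H1-for-f-spindle}) is applied to each $X_i^+$ \emph{individually} --- each of these is a~genuine $f_i$-spindle --- while $H_1(Q)$ is shown by a~direct computation to be free of rank $\sum_{i\neq j}\orb(f_i)|X_j|$, since $Q_0=0$ and the boundary of any triple $(x,y,z)$ only identifies pairs $(y,z)$ with $(y',z)$ for $y'$ in the same orbit as $y$ without introducing relations of finite order. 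Your realization argument at the end (assembling $f_k$-spindles with cyclic torsion $\Z_{n_k}$) does match the paper's intent, but it needs the actual decomposition to be established first.
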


This paper is arranged as follows. We provide basic definitions in Section~\ref{sec:definitions},
including the construction of a~distributive chain complex and its variants: augmented, reduced, and
related chain complexes. We also include a~discussion about degenerate and normalized complexes and
how they are related to each other.

The~next two sections are devoted to the~calculation of homology groups for $f$-spindles. In
Section~\ref{sec:torsion} we define an~$f$-spindle, provide a~few examples, and then compute
the~first homology group. Then in Section~\ref{sec:homology} we generalize these calculations
for any homology groups. We conclude this section with a~presentation of homology groups in terms
of generators and relations for any $f$-spindle, not necessarily finite.

The~final section is split into four parts. In the~first, we give
a~presentation of the~relative homology groups with respect to the~subspindle $X_0\subset X$. The~second
part contains a~proof of Theorem~\ref{thm:all-finite-groups} and the~third discusses the~Growth
Conjecture from~\cite{PrzSik-distr-hom}. The~last part contains a~result about the acyclicity of
a~distributive chain complex under a~small condition --- all that was known previously was that
homology was annihilated by some number, leaving it with a~possibility to have torsion
\cite{Prz-distr-survey}.

\section{Distributive homology}\label{sec:definitions}
A~spindle $(X,\opn)$ consists of a~set $X$ equipped with a~binary operation $\opn\colon X\times X\to X$
that~is
\begin{enumerate}
	\item idempotent, $x\opn x = x$, and
	\item self-distributive, $(x\opn y)\opn z = (x\opn z)\opn (y\opn z)$.
\end{enumerate}
A~\emph{(one-term) distributive chain complex} $C^{\opn}(X)$ of $X$ is defined as follows (see also
\cite{Prz-distr-survey,PrzSik-distr-hom}):
\begin{align}
	C^{\opn}_n(X)&:=\Z\! X^{n+1} = \Z\langle (x_0,\dots,x_n)\ |\ x_i\in X\rangle, \\
	\diff_n &:= \sum_{i=0}^n (-1)^i d^i,
\end{align}
where maps $d^i$ are given by the~formulas
\begin{align}
	d^0(x_0,...,x_n) &= (x_1,...,x_n), \textrm{ and}\\
	d^i(x_0,...,x_n) &= (x_0\opn x_i,...,x_{i-1}\opn x_i,x_{i+1},...,x_n).
\end{align}
We check that $d^id^j = d^{j-1}d^i$ whenever $i<j$, which implies $\diff^2=0$.
The~homology of this chain complex is called the~\emph{(one-term) distributive homology} of $(X,\opn)$
and it will be denoted by $H^{\opn}(X)$. There is also an~\emph{augmented} version, $\widetilde C(X)$,
with $\widetilde C^{\opn}_n(X) = C^{\opn}_n(X)$ for $n\geqslant 0$, but $\widetilde C^{\opn}_{-1}(X)=\Z$
and $\diff_0(x)=1$. Its homology, called the~\emph{augmented distributive homology} $\widetilde H^{\opn}(X)$,
satisfies the following, as in the~classical case:
\begin{equation}\label{eq:augm-H-vs-H}
	H^{\opn}_n(X)=
			\begin{cases}
					\Z \oplus \widetilde H^{\opn}_n(X),  & n = 0, \\
					\widetilde H^{\opn}_n(X),            & n > 0.
			\end{cases}
\end{equation}
For simplicity, we will omit $\opn$ and write $C(X)$ and $H(X)$ for the~distributive chain complex
and its homology, and similarly for the~augmented versions. Furthermore, we will use the~shorthand
notation $\underline x := (x_0,\dots,x_n)$ for a~sequence of elements and ocassionally
a~multilinear notation\footnote{
	Think of $(x_0,\dots,x_n)$ as an~element $x_0\otimes\cdots\otimes x_n$ in $\Z\!X^{\otimes(n+1)}$.
}
$(\ldots,x_i+x_i',\ldots) := (\ldots,x_i,\ldots)+(\ldots,x_i',\ldots)$.
In particular, $(0,\underline x) = 0$.	

Assume $Y\subset X$ is a~subspindle of $X$, i.e. $x\opn y\in Y$ whenever $x,y\in Y$. It follows
directly from the~definition above that the~chain complex $C(Y)$ is a~subcomplex of $C(X)$.
The~quotient $C(X,Y):=C(X)/C(Y)$ is called the~\emph{relative chain complex} of $X$ modulo $Y$. It is
spanned by sequences $\underline x$ where not all entries are from $Y$. Clearly, there is a~long exact
sequence of homology
\begin{equation}
	\ldots\longrightarrow H_n(Y)
				\longrightarrow H_n(X)
				\longrightarrow H_n(X,Y)
				\longrightarrow H_{n-1}(Y)
				\longrightarrow\ldots
\end{equation}
and an~analogous sequence when we replace the homologies of $Y$ and $X$ with their augmented versions.

Let $f\colon X\to Y$ be a~homomorphism of spindles, i.e. $f(x\opn x') = f(x)\opn f(x')$. There is
an~induced chain map $f_\sharp\colon C(X)\to C(Y)$ sending a~sequence $(x_0,\dots,x_n)$ to
$(f(x_0),\dots,f(x_n))$. In the case where $r\colon X\to X$ is a~rectraction on a~subspindle $Y$
(i.e. $r(X)=Y$ and $r|_Y = \id$), one has a~decomposition $C(X)\cong C(Y)\oplus C(X,Y)$.
In particular, for any element $b\in X$ one has $C(X)\cong C(b)\oplus C(X,b)$, so that $C(X,b)$
is independent of the~choice of $b$. It is called the~\emph{reduced chain complex} (see~\cite{PrzPut-lattices}).
As a~subcomplex of $C(X)$, it is generated by differences $\underline x - \underline b$.

Idempotency of the~spindle operation in $X$ implies that its~distributive chain complex $C(X)$ is in fact
a~\emph{weak simplicial module} (see~\cite{Prz-distr-survey,PrzPut-lattices}). In particular, there
are notions of degenerate and normalized complexes. Indeed, if $\underline x$ has a~repetition, say
$x_i=x_{i+1}$, so does each entry in $\diff\underline x$, as $d^i\underline x = d^{i+1}\underline x$
cancels each other and other faces preserve the~repetition. Hence, sequences with repetition span
a~subcomplex $\CD(X)\subset C(X)$, called the~\emph{degenerate complex} of $X$. Explicitly,
\begin{equation}
	\CD_n(X) := \Z\langle\ \underline x\ \vert\ x_i=x_{i+1}\textrm{ for some }0\leqslant i<n\ \rangle.
\end{equation}
The~quotient $\CN(X):=C(X)/\CD(X)$ is called the~\emph{normalized complex} and is generated by sequences
with no repetitions. Degenerate and normalized homology are written, respectively, as $\HD(X)$ and $\HN(X)$.
In classical homology theories (simplicial homology, group homology, etc.) the~degenerate complex is
acyclic, so that $\HN \cong H$. However, this does not hold for a~weak simplicial module and we can
have nontrivial degenerate homology in the~distributive case, so that normalized homology $\HN(X)$
is usually different from $H(X)$. However, we can split the~degenerate complex apart. This was first
shown in \cite{LithNelson-splitting} for quandles (for the~two-term variant of distributive homology)
and an~explicit formula for the~splitting map appeared for the~first time in \cite{NP-dihedral}.
It was observered in \cite{Prz-distr-survey,PrzPut-lattices} that the~same map works for the~one-term
variant as well.

\begin{theorem}[cf. \cite{Prz-distr-survey,PrzPut-lattices}]
	Let $(X,\opn)$ be a~spindle. Then the~exact sequence of complexes
	\begin{equation}
		0	\longrightarrow \CD(X)
			\longrightarrow C(X)
			\longrightarrow \CN(X)
			\longrightarrow 0
	\end{equation}
	splits. In particular, $H(X)\cong \HN(X)\oplus\HD(X)$.
\end{theorem}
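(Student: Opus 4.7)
The plan is to construct an~explicit chain retraction $\rho\colon C(X)\to\CD(X)$ onto the~degenerate subcomplex. Such a~retraction provides a~splitting of the~given short exact sequence, so that $C(X)\cong\CN(X)\oplus\CD(X)$ as chain complexes, and passing to homology then yields the~claimed decomposition $H(X)\cong\HN(X)\oplus\HD(X)$ at once.

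The~construction uses degeneracy operators $s_i\colon C_n(X)\to C_{n+1}(X)$, defined by
\begin{equation*}
	s_i(x_0,\dots,x_n) := (x_0,\dots,x_{i-1},x_i,x_i,x_{i+1},\dots,x_n),
\end{equation*}
which land in $\CD(X)$ by construction. The~first step is to check that $(d^i,s_j)$ satisfies the~(weak) simplicial identities. Most of these are formal; the~crucial ones, $d^i s_i = d^{i+1} s_i = \id$, use idempotency $x\opn x = x$ to handle the~duplicated entry, while relations such as $d^i s_j = s_{j-1} d^i$ for $i<j$ reduce to distributivity when an~$\opn x_j$ encounters the~doubled slot. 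This promotes $C(X)$ to a~weak simplicial module in the~sense of~\cite{PrzPut-lattices}.

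With this structure at hand, one writes $\rho$ down explicitly, either by following the~closed formula from~\cite{NP-dihedral} --- a~signed sum of compositions of the~form $s_{i_k}\cdots s_{i_1}d^{j_1}\cdots d^{j_k}$ mimicking the~classical Dold--Kan idempotent --- or recursively: on a~sequence with no repetitions set $\rho=0$, and on a~sequence whose first repetition occurs at position $i$ factor it as $s_i(\underline x')$ for a~shorter $\underline x'$ and iterate. In either presentation $\rho$ visibly restricts to the~identity on $\CD(X)$, so the~only genuine content is that $\rho$ is a~chain map.

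The~main obstacle is precisely this verification, $\rho\diff=\diff\rho$. It ultimately reduces to the~simplicial identities above, but combinatorial bookkeeping is required because each $d^i$ with $i\geqslant 1$ modifies all earlier entries through the~operation $\opn x_i$, so that a~large number of terms must be paired off and cancelled using distributivity. Once the~chain-map property is established, $C(X)\cong\ker\rho\oplus\im\rho\cong\CN(X)\oplus\CD(X)$ as chain complexes, and applying homology gives the~stated splitting.
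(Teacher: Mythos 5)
Your plan rests on the identity $d^i s_i = d^{i+1} s_i = \id$, but this is false for $i\geqslant 1$; the failure of exactly this equality is what makes $C(X)$ only a \emph{weak} simplicial module rather than a genuine one. Indeed, for $i\geqslant 1$,
\begin{equation*}
	d^i s_i(x_0,\ldots,x_n) \;=\; d^{i+1}s_i(x_0,\ldots,x_n) \;=\; (x_0\opn x_i,\ldots,x_{i-1}\opn x_i,\,x_i,x_{i+1},\ldots,x_n),
\end{equation*}
so idempotency yields $d^is_i=d^{i+1}s_i$, but the first $i$ coordinates have been hit by $\opn x_i$, so neither side is the identity (only the case $i=0$ gives $\id$). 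The classical Dold--Kan normalization projector is built directly on $d^is_i=\id$, and its whole purpose is to show that the degenerate subcomplex is \emph{acyclic}. The paper explicitly stresses that this conclusion fails here --- $\HD(X)$ can be nontrivial --- so an argument that literally mimics Dold--Kan cannot be right. Your recursive description of $\rho$ has the same problem: a degenerate sequence with exactly one repetition is $s_i(\underline x')$ with $\underline x'$ nondegenerate, and the recursion would give $\rho(s_i\underline x')=s_i\rho(\underline x')=0$, contradicting the requirement $\rho|_{\CD(X)}=\id$. (Also, the relations $d^is_j=s_{j-1}d^i$ for $i<j$ and $d^is_j=s_jd^{i-1}$ for $i>j+1$ are purely combinatorial and need neither idempotency nor distributivity.)

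Note also that the paper does not supply its own proof of this theorem; it merely cites \cite{Prz-distr-survey,PrzPut-lattices}, attributing the original quandle-case splitting to \cite{LithNelson-splitting} and the explicit formula to \cite{NP-dihedral}. That explicit chain map is not the Dold--Kan projector and is designed to be compatible with the weaker identity $d^is_i=d^{i+1}s_i$ alone. A correct write-up would have to reproduce that specific map (or an equivalent one), verify that it is a chain map using only the identities that actually hold in a weak simplicial module, and check that it restricts to the identity on $\CD(X)$; the Dold--Kan shortcut you sketch does not close this gap.
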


\begin{example}
	A~normalized complex for a~one-element splindle $\{b\}$ has a~unique generator in degree 0.
	Since a~retraction splits a~normalized complex as well, we obtain an~isomorphism
	$\Caugm\HN(X)\cong\HN(X,b)$ for any $b\in X$, so that the~normalized versions of reduced and
	augmented homologies coincide. In fact, the~inclusion $\CN(X,b)\subset\Caugm\CN(X)$ is
	a~homotopy equivalence.
\end{example}

In \cite{PrzPut-degen} we canonically decomposed the~degenerate complex into a~bunch of copies of
the~normalized complex. Therefore, normalized homology carries all information and there is no need
to bother with the~degenerate part.

\begin{theorem}[cf. \cite{PrzPut-degen}]\label{thm:CD-from-CN}
	Let $(X,\opn)$ be a~spindle. Then the~degenerate complex decomposes as
	\begin{equation}\label{eq:CD-from-CN}
		\CD_n(X) = \bigoplus_{p+q=n-2}\! \widetilde C_p(X)\otimes\CN_q(X)
	\end{equation}
	with the~differential acting only on the~first factor: $\diff(\underline x\otimes\underline y) =
	\diff\underline x\otimes\underline y$.
\end{theorem}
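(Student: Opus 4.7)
The plan is to construct an explicit chain isomorphism
\begin{equation*}
\Phi\colon \bigoplus_{p+q=n-2} \widetilde C_p(X)\otimes\CN_q(X) \xrightarrow{\cong} \CD_n(X),
\end{equation*}
with the source carrying the differential $\widetilde\diff\otimes\id$ summand-wise. The guiding observation is the ``last repetition'' classification of degenerate simplices: every $\underline z=(z_0,\ldots,z_n)\in\CD_n(X)$ has a well-defined largest index $i$ with $z_i=z_{i+1}$, and by maximality the tail $(z_{i+1},\ldots,z_n)$ automatically has no consecutive repetitions, i.e.\ lies in $\CN_{n-i-1}(X)$. The rule $\underline z\mapsto (z_0,\ldots,z_{i-1})\otimes(z_{i+1},\ldots,z_n)$ therefore yields a bijection of free generators between $\CD_n(X)$ and $\bigoplus_{p+q=n-2}\widetilde C_p(X)\otimes\CN_q(X)$, so only differential compatibility is at stake.

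I would first settle the base case $q=0$ by declaring
\begin{equation*}
\Phi\bigl(\underline x\otimes (y)\bigr) := (x_0,\ldots,x_p,y,y)
\end{equation*}
for $\underline x=(x_0,\ldots,x_p)$ and $y\in\CN_0(X)=\Z X$. A direct computation of $\diff$ on the right shows that $(-1)^{p+1}d^{p+1}$ and $(-1)^{p+2}d^{p+2}$ cancel verbatim, since idempotency $y\opn y=y$ forces $d^{p+1}=d^{p+2}$ on this element; all surviving $d^k$ with $k\le p$ act only on the prefix and preserve the trailing $(y,y)$. This immediately gives $\diff\Phi(\underline x\otimes(y))=\Phi(\widetilde\diff\underline x\otimes(y))$, identifying the subcomplex of $\CD_{*}(X)$ consisting of sequences whose last two entries coincide with $\widetilde C(X)$ shifted up by two.

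For $q\ge 1$, the naive formula $\Phi_0(\underline x\otimes(y_0,\ldots,y_q)):=(\underline x,y_0,y_0,y_1,\ldots,y_q)$ still realizes the generator bijection, but fails to be a chain map: the face operators $d^k$ with $k>p+2$ act through the $\opn$-operation, twist both the prefix and the earlier $y_j$, and produce residues that land in strictly smaller $q$-summands under the last-repetition rule. I would define the corrected $\Phi$ by descending induction on $q$, subtracting signed counter-terms $\Phi(\underline x'\otimes\underline y')$ with $|\underline y'|<|\underline y|$ that absorb these discrepancies in a telescoping fashion; the data for the counter-terms is read off directly from $\diff\Phi_0$. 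Verifying $\diff\circ\Phi=\Phi\circ(\widetilde\diff\otimes\id)$ is then an induction on $q$ using only idempotency (to cancel paired face maps at each repetition) and self-distributivity (to commute the iterated $\opn$-actions past each other). The main obstacle is exactly the bookkeeping for these corrections: ensuring the telescoping sum is well-defined, and checking that the $\opn$-actions applied to the normalized factor never create unexpected consecutive repetitions that would escape $\CN$. Once that organisation is in place, the remaining verification is routine.
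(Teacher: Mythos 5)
The paper does not actually prove this theorem; it cites it from \cite{PrzPut-degen}, which is listed as ``in preparation,'' so there is no in-paper argument to compare against. Evaluating your proposal on its own merits, the generator bijection via the last repetition is correct, and your verification for $q=0$ is also correct: with $\underline z=(\underline x,y,y)$, idempotency forces $d^{p+1}=d^{p+2}$, the lower faces commute with the trailing $(y,y)$, and no faces of index $>p+2$ exist, so the chain-map identity is immediate. That part is solid.

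The genuine gap is everything after that. For $q\geqslant 1$ the faces $d^{p+2+j}$, $j\geqslant 1$, really are present, and you correctly observe that $\Phi_0$ fails to be a chain map; but the proposal stops precisely where the work begins. ``Subtracting signed counter-terms\,\ldots\,read off directly from $\diff\Phi_0$'' and ``the remaining verification is routine'' is a plan, not a proof. Concretely: even in the smallest nontrivial case one finds
\begin{equation*}
\Phi\bigl(1\otimes(a,b)\bigr)=(a,a,b)-(a\opn b,\,a\opn b,\,a\opn b),
\end{equation*}
and already in the next case
\begin{equation*}
\Phi\bigl(1\otimes(a,b,c)\bigr)=(a,a,b,c)-(a\opn b,\,a\opn b,\,a\opn b,\,c)+(a\opn c,\,a\opn c,\,a\opn c,\,b\opn c),
\end{equation*}
so there are \emph{several} correction terms per step, their signs alternate, and verifying $\diff\Phi=0$ here already requires the self-distributivity identity $(a\opn b)\opn c=(a\opn c)\opn(b\opn c)$ to make the residual $d^{n}$-terms cancel. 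You need to (i) give an explicit closed form or a well-founded recursion for $\Phi$, (ii) prove that the recursion actually terminates on every $q$-level (not just that $q$ decreases --- multiple terms appear at the same level and may themselves feed back), and (iii) carry out the full cancellation, exhibiting exactly where idempotency and where distributivity enter. There is also the complication you flag but do not resolve: the faces $d^{p+2+j}$ can create \emph{new} consecutive repetitions inside the $\CN$-factor, shifting which $(\widetilde C_p\otimes\CN_q)$-summand a term lands in, so the ``last-repetition'' bookkeeping on the output side is not stable under your naive indexing. Until those three items are done, this is an outline of a proof, not a proof.
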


\noindent
In particular, $\HD_0(X)=\HD_1(X)=0$ and $\HD_2(X)=\widetilde H_0(X)\otimes\Z\! X$.

\section{A~family of spindles with torsion}\label{sec:torsion}
In this section we construct a family of spindles that have torsion in their homology groups.
Namely, we can realize every power of a~cyclic group as a~torsion subgroup of $H_1$.

\begin{definition}\label{def:f-spindle}
	Choose a~set together with a~basepoint, $(X,b)$, and set $X_0 = X-\{b\}$. Any function
	$f\colon X_0\to X_0$ induces a~spindle on $X$ by defining
	\begin{equation}
		x\opn y = \begin{cases}
									f(y), & \textrm{if }x=p,\\
										y , & \textrm{if }x\neq p.
							\end{cases}
	\end{equation}
	We call $(X,\opn)$ an~\emph{$f$-spindle} and denote it by $X_f$.
\end{definition}

The~function $f$ induces a~discrete semi-dynamical system on $X_0$. We can visualize it as a~graph
$\Gamma_f$ whose vertices are elements of $X_0$ and with directed edges $x\rightarrow f(x)$. Every
vertex in this graph has exactly one outcoming edge. If a~vertex $v$ has no incoming edges, it is
called an~\emph{initial} vertex or a~\emph{source}. The initial vertices are precisely the elements
of $X_0$ that are not in the~image of $f$. The~number of such elements will be denoted by $\init(f)$.
Finally, connected components of $\Gamma_f$ correspond to orbits of the~semi-dynamical system
induced by $f$. Their number will be denoted by $\orb(f)$. The~orbit of an~element $x$ will be
written as $\Orb{x}$.

Consider a~connected component $\Gamma^0_f$ of $\Gamma_f$. It can either be an~infinite directed tree
with no loops (so that $f^i(x)\neq x$ for any $i>0$) or there exists a~number $k>0$ such that for any
vertex $v\in\Gamma^0_f$ we have $f^{i+k}(v) = f^i(v)$ for $i$ big enough. When we choose the smallest
such $k$, then the set $\{f^i(v),\dots,f^{i+k-1}(v)\}$ is a~unique cycle in $\Gamma^0_f$, which we
call a~\emph{soma} of $\Gamma^0_f$. Clearly, the~component $\Gamma^0_f$ consists of this cycle and
\emph{dendrites}, possible infinite, as can be seen in Fig.~\ref{fig:soma-and-dendrites}.

Finally, we choose a~single vertex $v^i$ from any component of $\Gamma_f$ and set $\ell$ to be
the~greatest common divisor of lengths of all cycles in $\Gamma_f$. If $\Gamma_f$ has no
cycles at all, set $\ell=0$.

\begin{figure}[t]
	\begin{pspicture}(4.1,3.0)
		\psset{unit=0.7cm,nodesep=1mm,linewidth=0.5pt,arrowsize=5pt}%
		\rput(0.2,2.9){\rnode{n1}{\qdisk(0,0){1pt}}}\rput(0.2,1.1){\rnode{n2}{\qdisk(0,0){1pt}}}
		\rput(1.7,0.2){\rnode{n3}{\qdisk(0,0){1pt}}}\rput(3.2,1.1){\rnode{n4}{\qdisk(0,0){1pt}}}
		\rput(3.2,2.9){\rnode{n5}{\qdisk(0,0){1pt}}}\rput(1.7,3.8){\rnode{n6}{\qdisk(0,0){1pt}}}
		\ncline{->}{n1}{n2}\ncline{->}{n2}{n3}\ncline{->}{n3}{n4}
		\ncline{->}{n4}{n5}\ncline{->}{n5}{n6}\ncline{->}{n6}{n1}
		\rput(4.8,3.5){\rnode{u1}{\qdisk(0,0){1pt}}}
		\rput(5.8,4.5){\rnode{u2}{\qdisk(0,0){1pt}}}
		\rput(6.2,2.9){\rnode{u3}{\qdisk(0,0){1pt}}}
		\ncline{<-}{n5}{u1}\ncline{<-}{u1}{u2}\ncline{<-}{u1}{u3}
		\rput(4.9,1.0){\rnode{d1}{\qdisk(0,0){1pt}}}
		\rput(6.7,1.9){\rnode{d2}{\qdisk(0,0){1pt}}}
		\rput(6.4,0.3){\rnode{d3}{\qdisk(0,0){1pt}}}
		\rput(8.2,0.6){\rnode{d4}{\qdisk(0,0){1pt}}}
		\ncline{<-}{n4}{d1}\ncline{<-}{d1}{d2}\ncline{<-}{d1}{d3}\ncline{<-}{d3}{d4}
		\rput(-1.3,3.7){\rnode{l1}{\qdisk(0,0){1pt}}}
		\rput(-1.6,2.4){\rnode{l2}{\qdisk(0,0){1pt}}}
		\ncline{<-}{n1}{l1}\ncline{<-}{n1}{l2}
	\end{pspicture}
	\caption{A~typical connected component of $\Gamma_f$. It has four dendrites and six initial vertices.}
	\label{fig:soma-and-dendrites}
\end{figure}
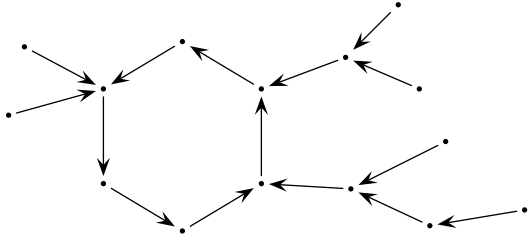

\begin{example}\label{ex:sigma-spindle}
	Let $X=\{0,\dots,k+1\}$ for some $k\geqslant 1$ and set $b=0$ so~that $X_0=\{1,\dots,k+1\}$.
	Define $\sigma_k\colon X_0\to X_0$ as follows:
	\begin{equation}
		\sigma_k(n) := \begin{cases}
			n+1,&\textrm{if } n<k,\\
			1,&\textrm{if } n=k,k+1
		\end{cases}
	\end{equation}
	The~graph for $\sigma_5$ is shown in Fig.~\ref{fig:sigma-graph}. It has one component with
	a~cycle of length $k=5$ and a~unique initial vertex.
\end{example}

\noindent
It appears that the first homology group of the~spindle obtained from $\sigma_k$ has $\Z_k$
as a~direct summand. Indeed, we have the~following formula:

\begin{proposition}
	Let $X=\{x_0,\dots,x_{k+1}\}$ and $\sigma_l\colon X_0\to X_0$ be as in Example~\ref{ex:sigma-spindle}. Then
	\begin{equation}\label{eq:homology-of-sigma-spindle}
		H_1(X_{\sigma_k}) = \Z^2\oplus\Z_k.
	\end{equation}
	In particular, every finite cyclic group appears as the torsion of the first homology of some spindle.
\end{proposition}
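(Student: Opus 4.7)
The plan is to compute $H_1(X_{\sigma_k})$ directly in the normalized complex, which suffices because Theorem~\ref{thm:CD-from-CN} gives $\HD_1=0$ and hence $H_1\cong\HN_1$. First I would write out the differentials explicitly. Since $x\opn y=y$ whenever $x\neq 0$ and $0\opn y=\sigma_k(y)$ for $y\in X_0$, the map $\diff_1$ vanishes on $(x,y)$ with $x\neq 0$, while $\diff_1(0,y)=(y)-(\sigma_k(y))$ for $y\in X_0$. A~case analysis on $\diff_2(x_0,x_1,x_2)$ according to which coordinates equal $0$ yields three families of boundary relations in $H_1$: (R1)~$(x,\sigma_k(x))=0$ for $x\in X_0$, from $\diff_2(x_0,0,x)$ with $x_0\in X_0$; (R2)~$(x,0)=(\sigma_k(x),0)$ for $x\in X_0$, from $\diff_2(0,x,0)$; and (R3)~$(x,z)=(\sigma_k(x),z)-(\sigma_k(z),z)$ for $x,z\in X_0$ with $x\neq z$, from $\diff_2(0,x,z)$. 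Crucially, no $\diff_2$ of a~normalized $2$-chain has a $(0,y)$-component with $y\in X_0$, because the two copies of $(0,x_2)$ in $\diff_2(x_0,0,x_2)$ for $x_0\in X_0$ cancel, and no other case produces such a~term.

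Next I~would split $\CN_1=V_{(0)}\oplus V_0\oplus V_+$ with $V_{(0)}=\langle(0,y):y\in X_0\rangle$, $V_0=\langle(x,0):x\in X_0\rangle$, and $V_+=\langle(x,z):x,z\in X_0,\ x\neq z\rangle$. By the preceding observation, $V_{(0)}$ injects into $H_1$, and its $1$-cycles form a~rank-one group generated by $(0,1)+\cdots+(0,k)$, i.e.\ the unique cycle in $\Gamma_{\sigma_k}$ (viewing $(0,y)$ as the edge from $\sigma_k(y)$ to $y$). In the $V_0$-part, relation (R2) combined with the fact that $\sigma_k$ has a~single orbit on $X_0$ identifies all generators, contributing another free $\Z$-summand.

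For $V_+$, I~would further decompose by the second coordinate, $V_+=\bigoplus_{z\in X_0}V_+^z$ with $V_+^z=\langle(x,z):x\in X_0,\ x\neq z\rangle$, noting that both (R1) and (R3) respect this splitting. Iterating (R3) in the form $(\sigma_k(x),z)=(x,z)+c_z$, where $c_z:=(\sigma_k(z),z)$, expresses each generator of $V_+^z$ as a~$\Z$-combination of $(1,z)$ and $c_z$. For each $z\in\im\sigma_k=\{1,\dots,k\}$, picking any $y$ with $\sigma_k(y)=z$ and applying (R3) to $x=y$ gives $(y,z)=(z,z)-c_z=-c_z$, while (R1) forces $(y,z)=0$; hence $c_z=0$, which collapses $V_+^z$ to zero via the iteration. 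The unique $z\notin\im\sigma_k$ is $z=k+1$, for which (R1) supplies no relation. There $c_{k+1}=(1,k+1)$, so iterating (R3) through $x=1,\dots,k-1$ yields $(i,k+1)=i\cdot(1,k+1)$ for $i=1,\dots,k$, while (R3) applied to $x=k$ gives $(k,k+1)=(\sigma_k(k),k+1)-c_{k+1}=(1,k+1)-(1,k+1)=0$; together these produce $k\cdot(1,k+1)=0$ with no further relation, so $V_+^{k+1}\cong\Z_k$. Combining the three summands gives $H_1(X_{\sigma_k})\cong\Z^2\oplus\Z_k$. The main obstacle is the last step: verifying that the relations from (R3) for $x=1,\dots,k-1$ genuinely reduce $V_+^{k+1}$ to a~single generator $(1,k+1)$ before the wrap-around kills it by $k$, so the torsion is exactly $\Z_k$ and not smaller.
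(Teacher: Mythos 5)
Your proof is correct, and your worry at the end is unfounded: the relations from (R3) for $x=1,\dots,k-1$ produce exactly $e_{i+1}=e_i+e_1$, the relation from $x=k$ produces exactly $e_k=0$, and these $k$ relations on $k$ generators have presentation matrix with determinant $\pm k$ and Smith normal form $\operatorname{diag}(1,\dots,1,k)$, so $V_+^{k+1}/B\cong\Z_k$ with no ambiguity.

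Your route is a direct, self-contained computation specialized to $\sigma_k$, whereas the paper obtains the proposition as an immediate special case of Theorem~\ref{thm:H1-for-f-spindle}, whose proof in turn funnels the iteration-along-the-orbit argument through Lemma~\ref{lem:compl-of-ker-diff}. The core mechanics coincide: the cycle sum $(0,1)+\cdots+(0,k)$ is isolated as a free cycle because no normalized $2$-boundary has a $(0,y)$-component (the two $(0,x_2)$ terms in $\diff(x_0,0,x_2)$ cancel, which is what makes the sums $(b,c_1)+\cdots+(b,c_k)$ survive in the paper's proof too); the $V_0$ piece and the $V_+^z$ pieces for $z$ in the cycle are collapsed by iterating $\sigma_k$, exactly as in the Lemma; and the unique initial vertex $z=k+1$ yields the $\Z_k$ torsion via the wrap-around relation, matching the paper's statement that each $(f(y),y)$ with $y$ initial has order $\ell=k$. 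What your approach buys is transparency and independence from the general machinery of Section~\ref{sec:torsion}; what it gives up is reusability, since the paper's Lemma~\ref{lem:compl-of-ker-diff} is stated in a form that also powers the higher-homology computation in Section~\ref{sec:homology}. Your further splitting of $V_+$ by the second coordinate $z$ is precisely the role of the fixed $\underline y$ in the Lemma, so the two arguments are organizationally isomorphic even though yours invokes neither the Lemma nor Theorem~\ref{thm:H1-for-f-spindle}.
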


\noindent
This proposition follows from a~more general result that holds for any $f$-spindle.

\begin{theorem}\label{thm:H1-for-f-spindle}
	The~first homology group $H_1(X_f)$ of an~$f$-spindle $X_f$ is generated by
	\begin{enumerate}
		\item pairs $(f(y),y)$, one per an~initial element $y\in X_0$,
		\item pairs $(v^i,b)$ and $(v^i,y)$, where $y\in X_0$ is not in the~same orbit as $v^i$, and
		\item sums $(b,c_1)+\ldots+(b,c_k)$, one for each cycle $(c_1,\ldots,c_k)$ in $\Gamma_f$,
	\end{enumerate}
	subject to a~relation $\ell \cdot (f(y),y)\equiv 0$. In particular,
	\begin{equation}\label{eq:H1-for-f-spindle}
		H_1(X_f) = \Z^{|X_0|(\orb(f)-1)+2\orb(f)}\oplus\Z_\ell^{\init(f)}
	\end{equation}
	if $X$ is a~finite set.
\end{theorem}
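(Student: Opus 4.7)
The plan is to compute $H_1(X_f) = \ker\partial_1/\im\partial_2$ directly, making $\partial_1$ and $\partial_2$ explicit and then reducing the resulting presentation to the claimed form.

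I would start with $\partial_1$. Since $x \opn y = y$ whenever $x \neq b$, the formula $\partial_1(x_0, x_1) = (x_1) - (x_0 \opn x_1)$ vanishes on every pair with $x_0 \in X_0$ and on $(b, b)$, while $\partial_1(b, y) = (y) - (f(y))$ for $y \in X_0$. Hence $\ker\partial_1$ is spanned by all $(x, y)$ with $x \in X_0$, by $(b, b)$, and by those $\Z$-combinations of the pairs $(b, y)$ whose images in $\Z X_0$ vanish; in the finite case the latter subspace is generated by the cycle sums $(b, c_1) + \cdots + (b, c_k)$, one per cycle of $\Gamma_f$. This supplies the three generator families (1)--(3) listed in the theorem, together with redundant generators of the form $(x, b)$ and $(x, y)$ with $x$ in the orbit of $y$, which the $\partial_2$-relations will eliminate.

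I would then split $\partial_2(x_0, x_1, x_2)$ into cases by whether $x_0$ or $x_1$ equals $b$; only four cases produce non-trivial output, giving the relations $(w, w) \equiv 0$ for $w \in X$, the $f$-diagonal $(y, f(y)) \equiv 0$, the second-coordinate-$b$ relation $(x, b) \equiv (f(x), b)$, and the main relation
\begin{equation*}
	(x, y) - (f(x), y) + (f(y), y) \equiv 0 \qquad (x, y \in X_0)
\end{equation*}
coming from $\partial_2(b, x, y)$. The third relation collapses the $(x, b)$-generators to the $\orb(f)$ orbit representatives $(v^i, b)$. Iterating the main relation gives $(f^k(x), y) \equiv (x, y) - k(f(y), y)$, so traversing a cycle of length $m_i$ yields $m_i(f(y), y) \equiv 0$, and hence the order of $(f(y), y)$ divides $\ell$. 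When $y$ is not initial, choosing $z \in f^{-1}(y)$ and combining the main relation at $x = z$ with $(z, f(z)) \equiv 0$ and $(y, y) \equiv 0$ forces $(f(y), y) \equiv 0$; propagating the main relation then kills $(x, y)$ throughout the orbit of $y$. When $y$ is initial, $(f(y), y)$ persists and the representative $(v^*, y)$ with $v^*$ in the orbit of $y$ reduces to a $\Z$-multiple of $(f(y), y)$, hence is redundant, while for $y$ outside the orbit of $v^i$ the element $(v^i, y)$ remains irreducible.

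The main obstacle is to show that no relation survives beyond $\ell(f(y), y) \equiv 0$ and that the listed free generators are $\Z$-independent. I would construct a family of detecting homomorphisms on $C_1$ that vanish on $\im\partial_2$. For each initial $y \in X_0$ choose a depth function $d \colon X_0 \to \Z_\ell$ with $d(f(x)) \equiv d(x) + 1 \pmod{\ell}$, well-defined because $\ell$ divides every cycle length once one value per orbit is fixed; the functional $(u, w) \mapsto d(u) - d(y)$ for $w = y,\ u \in X_0$ (and zero otherwise) annihilates $\partial_2$ by a short case-check and sends $(f(y), y)$ to $1 \in \Z_\ell$. The free generators are detected by indicator-type functionals: characteristic maps supported on $\{(u, b) : u$ lies in orbit $i\}$ and on $\{(u, y) : u$ lies in orbit $i\}$ for fixed $y$ outside orbit $i$ pick out $(v^i, b)$ and $(v^i, y)$ respectively, and the sum over $\{(b, c) : c$ on the cycle of orbit $i\}$ picks out the corresponding cycle-sum generator. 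Combining these verifications yields the claimed decomposition $\Z^{|X_0|(\orb(f)-1) + 2\orb(f)} \oplus \Z_\ell^{\init(f)}$ in the finite case.
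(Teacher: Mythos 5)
Your proposal is correct and arrives at the claimed presentation, but the method for establishing independence of the generators is genuinely different from the paper's. The paper passes to the normalized complex $\CN(X_f)$, reads off $\ker\diff_1$ and the relations from $\diff_2$, and then invokes Lemma~\ref{lem:compl-of-ker-diff} (a quotient/kernel analysis of the restricted differential $\diff\colon V\to W$) to control how the relations interact; that lemma is the workhorse and is reused verbatim for the higher homology groups in Section~\ref{sec:homology}. You instead work in the unnormalized complex (absorbing the degenerate cells $(w,w)$ via the relations they themselves generate) and finish by exhibiting a family of explicit detecting homomorphisms on $C_1$ that annihilate $\im\diff_2$: the orbit-indicator functionals $\psi_i$, $\psi_{i,y}$, $\chi_i$ for the free part and, crucially, a $\Z_\ell$-valued depth functional $\phi_y$ for the torsion part. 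The place where initiality of $y$ is essential --- namely that the boundary $\diff(x_0,b,x_2)=(x_2,f(x_2))$ never has second coordinate $y$, which is what makes $\phi_y$ descend --- is not spelled out but is implicit in your ``short case-check,'' and it is exactly the algebraic shadow of the paper's observation that for non-initial $y=f(z)$ the extra boundary $(z,y)$ kills $(f(y),y)$. Your dual/functional approach is more elementary and self-contained for $H_1$; the paper's lemma-based approach is heavier but scales to all $\HN_n$, which is why the paper chooses it. One small point to be careful about when writing this up: $\phi_y(v^i,y)=d(v^i)-d(y)$ need not vanish, so the $\Z$-valued functionals must be applied first to clear out the free coefficients before $\phi_y$ is used to read off the coefficient of $(f(y),y)$ modulo $\ell$; you gesture at this ordering but it deserves an explicit sentence.
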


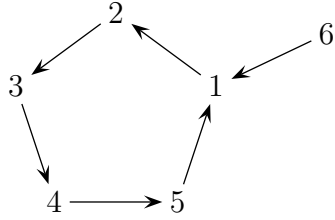
\begin{figure}[t]
	\psset{unit=0.7cm,nodesep=1mm,linewidth=0.5pt,arrowsize=5pt}%
	\begin{pspicture}(4.0,4.0)
		\rput(3.90,2.4){\rnode{x1}{1}}
		\rput(2.00,3.8){\rnode{x2}{2}}
		\rput(0.10,2.4){\rnode{x3}{3}}
		\rput(0.83,0.2){\rnode{x4}{4}}
		\rput(3.17,0.2){\rnode{x5}{5}}
		\rput(6.00,3.4){\rnode{x6}{6}}
		\ncline{->}{x1}{x2}\ncline{->}{x2}{x3}\ncline{->}{x3}{x4}
		\ncline{->}{x4}{x5}\ncline{->}{x5}{x1}\ncline{->}{x6}{x1}
	\end{pspicture}
	\caption{A~graph of the~function $\sigma_5$ from Example~\ref{ex:sigma-spindle}.}%
	\label{fig:sigma-graph}
\end{figure}

\begin{corollary}
	Every power of a~finite cyclic group can be realized as torsion of a~first homology for some spindle.
	Namely, let $X_0=\{1,\dots,k+r\}$ and define $\sigma_{k,r}\colon X_0\to X_0$ by the~formula
	\begin{equation}
		\sigma_{k,r}(n) := \begin{cases}
			n+1,&\textrm{if } n<k,\\
			1,&\textrm{if } n\geqslant k
		\end{cases}
	\end{equation}
	Then the~torsion subgroup $H_1(X_{\sigma_{k,r}})$ is isomorphic to $\Z_k^r$.
\end{corollary}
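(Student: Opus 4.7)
The plan is to apply Theorem~\ref{thm:H1-for-f-spindle} directly, reducing the corollary to the computation of the three invariants $\orb(\sigma_{k,r})$, $\init(\sigma_{k,r})$, and $\ell$ attached to the function $\sigma_{k,r}\colon X_0 \to X_0$.

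First I would describe the graph $\Gamma_{\sigma_{k,r}}$. The restriction of $\sigma_{k,r}$ to $\{1,\dots,k\}$ is the cyclic shift $n\mapsto n+1 \pmod k$, so these vertices form a single directed cycle of length $k$ (the unique soma). The remaining vertices $k+1,\dots,k+r$ each map directly to $1$, attaching to the cycle as $r$ separate length-one dendrites. Hence $\Gamma_{\sigma_{k,r}}$ is connected.

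From this picture the three invariants are immediate: connectedness gives $\orb(\sigma_{k,r}) = 1$; the image of $\sigma_{k,r}$ is exactly $\{1,\dots,k\}$, so the initial vertices are $\{k+1,\dots,k+r\}$ and $\init(\sigma_{k,r}) = r$; and the only cycle in $\Gamma_{\sigma_{k,r}}$ has length $k$, so $\ell = k$. Substituting these values into formula~(\ref{eq:H1-for-f-spindle}) yields
\begin{equation*}
	H_1(X_{\sigma_{k,r}}) \;=\; \Z^{(k+r)(1-1) + 2\cdot 1} \oplus \Z_k^{\,r} \;=\; \Z^2 \oplus \Z_k^{\,r},
\end{equation*}
whose torsion subgroup is $\Z_k^{\,r}$, as asserted.

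There is no real obstacle here, since all the work is carried by Theorem~\ref{thm:H1-for-f-spindle}; the corollary is a one-line substitution once the graph of $\sigma_{k,r}$ has been drawn. The only thing to be a bit careful about is not to miscount $\init$ by accidentally including $1$ (which lies in the image because $\sigma_{k,r}(k)=1$) or missing the fact that each of $k+1,\dots,k+r$ is genuinely a source, even though they all land on the same target.
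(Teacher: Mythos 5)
Your proof is correct and takes the approach the paper intends: the corollary follows by plugging $\orb(\sigma_{k,r})=1$, $\init(\sigma_{k,r})=r$, and $\ell=k$ into formula~\eqref{eq:H1-for-f-spindle} from Theorem~\ref{thm:H1-for-f-spindle}. The paper offers no separate proof for this corollary, treating it as an immediate substitution, and your computation of the graph invariants (one cycle of length $k$ on $\{1,\dots,k\}$, with $k+1,\dots,k+r$ as the $r$ sources all feeding into vertex $1$) is exactly right.
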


We need one technical, but useful, fact before we prove Theorem~\ref{thm:H1-for-f-spindle}. It will
be an~important tool for the~calculation of higher homology groups in the~next section.

\begin{lemma}\label{lem:compl-of-ker-diff}
	Choose $\underline y\in\CN_n(X)$ with $y_0\neq b$ and an~orbit $\Orb{a}$ of $a\in X_0$. Let $V\subset\CN_{n+2}(X)$
	and $W\subset\CN_{n+1}(X)$ be subgroups spanned by sequences $(b,x,\underline y)$ and $(x,\underline y)$
	respectively, with $x\in\Orb{a}$. If $\Orb{a}\neq\Orb{y_0}$ we also add $(f(y_0),\underline y)$
	to the~list of generators of $W$. The~restricted differential $\diff\colon V\longrightarrow W$ is
	injective and $\coker\diff$ is generated by $(a,\underline y)$, if $\Orb{a}\neq\Orb{y_0}$, and
	$(f(y_0),\underline y)$ subject to the~relation $k \cdot (f(y_0),\underline y)\equiv 0$, if $\Orb{y_0}$
	has a~cycle of length $k$.
\end{lemma}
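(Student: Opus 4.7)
The plan is to compute $\diff(b,x,\underline{y})$ explicitly in the normalized complex and then to analyze the restricted map $\diff\colon V\to W$ using the dynamics of $f$ on $\Orb{a}$.

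First, I would carry out a face-by-face computation. Since $b\opn z = f(z)$ and $z'\opn z=z$ for $z'\in X_0$, the first three faces give
\[
d^0(b,x,\underline y)=(x,\underline y),\quad d^1(b,x,\underline y)=(f(x),\underline y),\quad d^2(b,x,\underline y)=(f(y_0),\underline y),
\]
while for $i\geq 3$ the second and third entries of $d^i(b,x,\underline y)$ both reduce to $y_{i-2}$ (using $x\opn y_{i-2} = y_0\opn y_{i-2} = y_{i-2}$), so $d^i$ is degenerate. Hence in $\CN_{n+1}(X)$,
\[
\diff(b,x,\underline y)=(x,\underline y)-(f(x),\underline y)+(f(y_0),\underline y),
\]
and each summand lies in $W$ because $\Orb{a}$ is $f$-invariant, with $(f(y_0),\underline y)$ belonging to $W$ either as an extra generator (when $\Orb{a}\neq\Orb{y_0}$) or automatically as one of the $(x,\underline y)$ (when $\Orb{a}=\Orb{y_0}$).

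For injectivity I would collect the coefficients of an element $\sum_{x\in\Orb{a}} n_x(b,x,\underline y)$ in the kernel against the basis of $W$. The vanishing conditions read $\sum_x n_x = 0$ and $n_z = \sum_{f(x)=z} n_x$ for every $z\in\Orb{a}$. The latter family forces the support $S=\{x : n_x\neq 0\}$ to satisfy $S\subseteq f(S)$; finiteness of $S$ then makes $f|_S$ a bijection, so $S$ is a union of $f$-cycles. Since $\Orb{a}$ contains at most one cycle, $S$ is either empty or coincides with that cycle, in which case all $n_{c_i}$ on the cycle equal a common $m$, and $\sum n_x = km = 0$ with $k\geq 1$ gives $m=0$.

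To identify $\coker\diff$ I would exploit the image relations $e_x \equiv e_{f(x)} - g$, with $e_x=(x,\underline y)$ and $g=(f(y_0),\underline y)$, to propagate along $\Gamma_f$ restricted to $\Orb{a}$: each forward $f$-edge adds $g$ and each backward edge subtracts $g$. Connectivity of $\Orb{a}$ in $\Gamma_f$ then yields $e_x = e_a + c_x\,g$ for every $x\in\Orb{a}$, with $c_x$ an integer well-defined modulo the cycle length $k$ of $\Orb{a}$; summing the relations around the cycle gives the torsion identity $k g\equiv 0$. In the coincidence case $\Orb{a}=\Orb{y_0}$, the vanishing $e_{y_0}=0$ (from the degeneracy of $(y_0,\underline y)$) acts as a boundary condition that forces every $e_x$ to be a multiple of $g$ alone, eliminating the separate generator $(a,\underline y)$ from the presentation and leaving only $g$ with its torsion relation.

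The main obstacle is precisely this coincidence case $\Orb{a}=\Orb{y_0}$: there the sequences $(b,y_0,\underline y)$ and $(y_0,\underline y)$ both vanish in $\CN$, so one must first delete them from $V$ and $W$ and verify that the matrix analysis of $\diff$ still works, and then use $e_{y_0}=0$ as the boundary condition that simultaneously eliminates $(a,\underline y)$ as a cokernel generator and yields the correct torsion relation. The tree-case injectivity and the cycle-case torsion computation away from this coincidence are essentially formal.
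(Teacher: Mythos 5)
Your proposal is correct and follows essentially the same route as the paper: the same three-term expression for $\diff(b,x,\underline y)=(x,\underline y)-(f(x),\underline y)+(f(y_0),\underline y)$, the same use of the relations $e_x\equiv e_{f(x)}-g$ propagated along the orbit, and the same identification of the unique cycle in $\Orb{a}$ as the source of both the potential kernel element and the torsion relation $kg\equiv 0$. The paper merely reorganizes the argument by first passing to the quotient $Q=\coker\diff/(f(y_0),\underline y)$ and then analyzing the cycle sum in $V$, whereas you verify injectivity directly by a support argument and then propagate relations for the cokernel; you also flag the coincidence case $\Orb{a}=\Orb{y_0}$ more explicitly than the paper does.
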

\begin{proof}
	We will prove this lemma by computing the~quotient $Q:=\coker\diff/(f(y_0),\underline y)$.
	Each element $\diff(b,x,\underline y)$ gives a~relation in $Q$
	\begin{equation}\label{eq:Cbxx-rel}
		(x,\underline y)\equiv (f(x),\underline y).
	\end{equation}
	Hence, we can replace $x$ with any other element from its orbit. In~particular $Q=0$ if $y_0$ and
	$a$ are in the~same orbit. Otherwise, it is freely generated by $(a,\underline y)$. On the~other
	hand, the~kernel of the~composition
	\begin{equation}
		V\stackrel\diff\longrightarrow\coker\diff\longrightarrow Q
	\end{equation}
	is trivial, if the~orbit of $a$ is a~directed tree, and one-dimensional otherwise, generated by
	a~sum $(b,c_1,\underline y)+\ldots+(b,c_k,\underline y)$, where $(c_1,\ldots,c_k)$ is a~cycle
	in $\Orb{a}$. The~latter is mapped by $\diff$ to $k(f(y_0),\underline y)$. Hence, $\ker\diff=0$
	and the~cokernel is as expected.
\end{proof}

\begin{proof}[Proof of Theorem \ref{thm:H1-for-f-spindle}]
	Because for a~spindle we have $H_1(X)=\HN_1(X)$, we will consider only sequences without repetitions.
	The~first differential $\diff\colon\CN_1(X_f)\to\CN_0(X_f)$ is given by
	\begin{equation}
		\diff(x,y) = y - x\opn y =
				\begin{cases}
					0,			& \textrm{if }x\neq b,\\
					y-f(y),	& \textrm{if }x=b.
				\end{cases}
	\end{equation}
	Hence, the~kernel of $\diff$ is freely generated by
	\begin{itemize}
		\item pairs $(x,y)$ with $x\neq b$ and
		\item sums $(b,c_1)+\ldots+(b,c_k)$, where $(c_1,\ldots,c_k)$ is a~cycle in $\Gamma_f$.
	\end{itemize}
	Now consider relations introduced by $\diff(x,y,z)$.
	If $x,y\neq b$, then $\diff(x,y,z)=(z,z)=0$. When only $y\neq b$, the relations are
	\begin{align}
		(f(y),z)&\equiv (y,z) + (f(z),z),\quad\textrm{if }z\neq b, \textrm{ and}\\
		(f(y),b)&\equiv (y,b).
	\end{align}
	According to Lemma~\ref{lem:compl-of-ker-diff}, this restricts pairs $(x,y)$ to $(v^i,y)$, where
	$v^i$ and $y$ are from different orbits, and to $(f(y),y)$ (with $y\neq b$). The~latter is
	annihilated by the length of any cycle in the~graph $\Gamma_f$.

	If $y$ is initial, there are no more relations among generators $(x,y)$. Otherwise, for $y=f(z)$
	we have $\diff(x,b,z)=(z,f(z))=(z,y)$, which forces $(f(y),y)$ to be zero:
	\begin{equation}
		(f(y),y) \equiv (z,y) + (f(y),y) \equiv (f(z),y) = (y,y) \equiv 0.
	\end{equation}
	This fulfills all relations. In particular, each cycle $\underline c$ in $\Gamma$ contributes
	a~free generator to $\HN_1(X_f)$ and sequences $(f(y),y)$ have order $\ell$. This ends the~proof.
\end{proof}

\begin{corollary}
	First homology of an~$f$-spindle $X_f$ has torsion if an only if the~following three conditions
	hold:
	\begin{enumerate}
		\item $f$ has an~initial element,
		\item $f$ has a~cycle,
		\item length of cycles of $f$ are not co-prime, i.e. they have a~common divisor $d>1$.
	\end{enumerate}
	The~second condtition is automatic if $X$ is finite, but not the~others.
\end{corollary}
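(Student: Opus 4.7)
The plan is to read the corollary directly off Theorem~\ref{thm:H1-for-f-spindle}. That theorem presents $H_1(X_f)$ via three families of generators --- the pairs $(v^i,b)$ and $(v^i,y)$, the cycle-sums $(b,c_1)+\cdots+(b,c_k)$, and the pairs $(f(y),y)$ for initial $y\in X_0$ --- subject only to the relation $\ell\cdot(f(y),y)\equiv 0$. The first two families contribute free summands, so any torsion must come from the third. The task therefore reduces to determining exactly when that third family generates a nontrivial torsion element.

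For the \emph{if} direction, under (1)--(3) there is at least one initial $y$, and $\ell\geqslant 2$ by~(2) and~(3), so $(f(y),y)$ generates a nontrivial $\Z/\ell\Z$-summand. For the \emph{only if} direction, I would enumerate the three ways the third family can fail to contribute torsion: no initial element (nothing to generate); $\ell=0$ (the relation is vacuous, giving extra free rank rather than torsion); and $\ell=1$ (the generator is killed outright). These correspond exactly to failures of~(1), (2), and~(3), since $\ell\geqslant 2$ decomposes as ``$f$ has a cycle \emph{and} the gcd of cycle lengths exceeds~$1$.''

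For the finiteness addendum, pigeonhole gives~(2) automatically when $X_0$ is finite: iterating $f$ on any element must eventually repeat, forcing a cycle. That (1) and~(3) can still fail in the finite case, and that all three can fail in the infinite case, is witnessed by elementary examples such as the right-shift $n\mapsto n+1$ on $\Z_{\geqslant 0}$ (only~(1) holds), the shift on $\Z$ (neither~(1) nor~(2) holds), and a finite spindle consisting of disjoint cycles of coprime lengths plus one initial dendrite (only~(3) fails). Since everything rests on the already-established Theorem~\ref{thm:H1-for-f-spindle}, no step is a genuine obstacle; the one place warranting care --- the closest thing to a ``main obstacle'' --- is the $\ell=0$ convention, which must be read as making the relation vacuous (a free $\Z$-summand) rather than as $\Z/0\Z$-torsion, so that the no-cycle case correctly contributes to rank and not to torsion.
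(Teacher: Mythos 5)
Your proposal is correct and matches the paper's (implicit) argument: the paper states this corollary without proof as an immediate consequence of Theorem~\ref{thm:H1-for-f-spindle}, and your reading-off of the torsion subgroup $\Z_\ell^{\init(f)}$ — with the correct distinction between $\ell=0$ (free), $\ell=1$ (trivial), and $\ell\geqslant 2$ (genuine torsion), the latter equivalent to conditions (2) and (3) jointly — is exactly the intended reasoning. The finiteness addendum via pigeonhole is also the standard observation.
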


\section{Higher homology groups for \texorpdfstring{$f$}{f}-spindles}\label{sec:homology}
We will now compute higher homology groups for an~$f$-spindle and for simplicity we will restrict
to the~normalized part. Doing so already determines the~whole homology, as explained
in Theorem~\ref{thm:CD-from-CN} (see Corollary~\ref{cor:distr-homology-whole}).

In this section, $X$ will always stand for an~$f$-spindle induced by a~fixed function
\hbox{$f\colon\! X_0\!\to\! X_0$}, where $X=X_0\cup\{b\}$. Recall from the~previous section that each connected
component $\Gamma^0_f$ of the~graph $\Gamma_f$ is represented by some vertex $v^i$ and it is either
an~infinite directed tree or it contains a~unique cycle $\underline c = (c_1,\dots,c_k)$ of length $k$.
In particular, the~set of distinguished vertices $\{v^i\}$ parametrizes the~set of orbits in $X$
different from $\{b\}$. Finally, $\ell$ denotes the~greatest common divisor of lengths of all cycles
in $\Gamma_f$ (we set $\ell=0$ if $\Gamma_f$ has no cycles).

According to Theorem~\ref{thm:H1-for-f-spindle}, generators of $H_1(X)$ split into two groups:
sequences with two entries from the~same orbit or from two different orbits. The~first generate
the~torsion subgroup and the~latter are free. A~similar phenomenon occured in
Lemma~\ref{lem:compl-of-ker-diff}, where we compare orbits of the~first two entries in a~sequence.
This observation motivates the~following splitting of $\CN(X)$.

Let $\CNxx(X)$ be spanned by sequences $\underline x$ of length at least two, with $x_0$ and $x_1$
from the~same orbit. Clearly, for such a~sequence $d^j\underline x=0$ if $j\geqslant 2$ and
$d^0\underline x = d^1\underline x$. Hence, $\CNxx(X)$ is a~subcomplex of $\CN(X)$ and has
a~trivial differential. The~quotient complex $\CNxy(X):=\CN(X)/\CNxx(X)$ is freely spanned by sequences
$\underline x$ of length $1$ or with $x_0$ and $x_1$ lying in two different orbits (in particular,
we can take $b$ as one of them). Since $d^j\underline x\in\CNxx(X)$ for any sequence $\underline x$
as long as $j\geqslant 2$, the~differential in $\CNxy(X)$ has only two terms: $\diff=d^0-d^1$.

\begin{lemma}\label{lem:HNxy}
	The homology $\HNxy(X)$ is freely generated by three types of chains:
	\begin{itemize}
		\item type I:\phantom{II} $(v^i,x_1,\dots,x_n)$, where $x_1$ is in a~different orbit from that of $v^i$, and
		\item type II:\phantom{I} $(b,x_1,x_2,\dots,x_n)$, where both $x_1$ and $x_2$ are in the~same orbit, and
		\item type III: $\displaystyle{\sum_{i=1}^k(b,c_i,x_2,\dots,x_n)}$, where $(c_1,\dots,c_k)$ is a~cycle
					from beyond the~orbit of~$x_2$.
	\end{itemize}
	In all cases, neighboring entries are never equal.
\end{lemma}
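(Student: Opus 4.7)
The plan is to exploit the simplification $\diff = d^0 - d^1$ on $\CNxy(X)$, established in the paragraph preceding the lemma, and read off cycles and boundaries directly. For $v\in X_0$ one has $v\opn x_1 = x_1$, so $d^0 = d^1$ on $(v,x_1,\ldots,x_n)$ and every such sequence is closed. Meanwhile,
\[
	\diff(b,x_1,x_2,\ldots,x_n) = (x_1,x_2,\ldots,x_n) - (f(x_1),x_2,\ldots,x_n)
\]
in $\CNxy$, and this difference vanishes precisely when both summands lie in $\CNxx$, i.e.\ when $\Orb{x_1} = \Orb{x_2}$; this shows that type II chains are closed and identifies the only source of nontrivial differentials.

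For cycles of the form $(b,x_1,x_2,\ldots,x_n)$ with $\Orb{x_1}\neq\Orb{x_2}$ I would fix the tail $\underline y = (x_2,\ldots,x_n)$ and an orbit $\Orb a \neq \Orb{x_2}$. Under the identification of $\mathrm{span}\{(b,x,\underline y) : x\in\Orb a\}$ with $\Z\langle\Orb a\rangle$, the restricted differential becomes the map $x\mapsto x - f(x)$, and the main technical step is to prove that its kernel is freely generated by the cycle sums $c_1 + \cdots + c_k$, one per cycle $(c_1,\ldots,c_k)\subseteq\Orb a$ (and trivial when $\Orb a$ has no cycle). For an orbit containing a cycle I would argue by induction through the dendrites: an initial vertex $y$ satisfies $\lambda_y = \sum_{f(x)=y}\lambda_x = 0$, propagating inward forces $\lambda$ to vanish off the cycle, and on the cycle the equation $\lambda_{c_{i+1}} = \lambda_{c_i}$ forces $\lambda$ to be constant. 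For a cycle-free (necessarily infinite) orbit I would note that $f$-trajectories are non-repeating, so for any finite $\Lambda = \mathrm{supp}\,\lambda$ there is some $N$ with $f^N(\Lambda)\cap\Lambda = \emptyset$; but $\lambda = f_*^N\lambda$ implies $\Lambda \subseteq f^N(\Lambda)$, forcing $\Lambda = \emptyset$. This produces exactly the type III classes.

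Finally, since every summand of $\diff(b,x,\underline z)$ begins with an element of $X_0$, the image of $\diff$ lies in the span of non-$b$-leading sequences; hence types II and III survive to linearly independent nontrivial classes in $\HNxy$. For the automatic cycles $(v,x_1,\ldots,x_n)$ with $v\neq b$, the boundaries $\diff(b,x,x_1,\ldots,x_n) = (x,x_1,\ldots) - (f(x),x_1,\ldots)$ for $x$ in a fixed orbit $\Orb w \neq \Orb{x_1}$ impose $(x,x_1,\ldots) \equiv (f(x),x_1,\ldots)$; connectedness of $\Orb w$ under the $f$-relation collapses all these representatives to a single class, giving type I. The three families are independent in $\HNxy$ because they live in pairwise disjoint $\diff$-invariant direct summands of $\CNxy$ --- non-$b$-leading; $b$-leading with $\Orb{x_1} = \Orb{x_2}$; and $b$-leading with $\Orb{x_1}\neq\Orb{x_2}$ --- so the local analyses assemble without interference.
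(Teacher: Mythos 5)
Your proof is correct and follows essentially the same route as the paper's (which is much terser): identify the unique nonzero face $\diff(b,x_1,\underline y)=(x_1,\underline y)-(f(x_1),\underline y)$, read off the cycles, and note that the boundaries only collapse non-$b$-leading sequences within a fixed orbit to a single class; you fill in the kernel computation over $\Gamma_f$ (including the infinite, cycle-free case) and the freeness argument that the paper leaves implicit. Two small precision points, neither fatal: the assertion that $\diff(b,x_1,\ldots)$ vanishes \emph{precisely} when $\Orb{x_1}=\Orb{x_2}$ overlooks the fixed-point case $x_1=f(x_1)$, which your kernel analysis then correctly recovers as a length-one type III cycle; and the closing appeal to ``pairwise disjoint $\diff$-invariant direct summands'' is not literally correct, since $\diff$ carries the $b$-leading cross-orbit summand into the non-$b$-leading one --- but the independence you need is already supplied by your earlier observation that every boundary is non-$b$-leading, so nothing breaks.
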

\begin{proof}
	The~only case $\diff\underline x\neq 0$ is when $x_0=b$ and orbits of $x_1$ and $x_2$ are not
	the~same (or simply $\underline x = (b,x_1)$). In such a~case
	\begin{equation}\label{eq:HNxy-diff}
		\diff(b,x_1,\underline y) = (x_1,\underline y)-(f(x_1),\underline y).
	\end{equation}
	This has two consequences:
	\begin{enumerate}
		\item cycles are the~chains listed in the~lemma, except that in the~first case all sequences
					$\underline x$ with $x_0\neq b$ are allowed,
		\item boundaries \eqref{eq:HNxy-diff} only restrict type I generators: we can replace $x_0$ in
					$\underline x$ by any other element from the~same orbit; in particular by $v^i$. 
	\end{enumerate}
	This gives the~desired presentation of $\HNxy(X)$.
\end{proof}

The~chain complexes described above induce a~long exact sequence of homology
\begin{equation}\label{eq:LES-CNxx-CN-CNxy}
	\ldots\longrightarrow\CNxx_n(X)
				\longrightarrow\HN_n(X)
				\longrightarrow\HNxy_n(X)
				\stackrel{\delta_n}
				\longrightarrow\CNxx_{n-1}(X)
				\longrightarrow\ldots
\end{equation}
where $\delta_n([a])=\sum_{i=2}^n(-1)^id^na = \diff a$ is induced by the~full differential in
$\CN(X)$. Due to Lemma~\ref{lem:HNxy} the groups $\HNxy_n(X)$ are free, so are $\ker\delta_n$ which
results in a~splitting formula
\begin{equation}\label{eq:HN-ker-coker}
	\HN_n(X) \cong \ker\delta_n\oplus\coker\delta_{n+1}.
\end{equation}
It remains to compute both summands.

\begin{lemma}\label{lem:coker}
	The cokernel of $\delta_n$ is a~free $\Z_\ell$-module with basis consisting of sequences
	$(f(x),x,\dots)$ and $(f^2(x),f(x),x,\dots)$, where $x$ is initial in both cases.
\end{lemma}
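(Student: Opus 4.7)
The plan is to compute $\coker\delta_n$ by analyzing $\delta_n$ on each of the three types of generators of $\HNxy_n(X)$ from Lemma~\ref{lem:HNxy} and reducing arbitrary elements of $\CNxx_{n-1}(X)$ via the resulting relations.

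First, I would compute $\delta_n$ explicitly, using normalization to eliminate most $d^j$-terms. For a type~II cycle $a=(b,x_1,x_2,\dots,x_n)$, every $d^j a$ with $j\geqslant 3$ has positions~$0$ and~$1$ both equal to $x_j$, so
\begin{equation*}
	\delta_n a = (x_1,x_2,\dots,x_n) - (f(x_1),x_2,\dots,x_n) + (f(x_2),x_2,x_3,\dots,x_n).
\end{equation*}
For a type~III cyclic sum, $d^0-d^1$ telescopes and the higher $d^j$ vanish (when $x_2\neq b$), giving $\delta_n = k\cdot(f(x_2),x_2,\dots,x_n)$. Type~Ia contributes zero by the same normalization argument. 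Type~Ib $(v^i,b,x_2,x_3,\dots,x_n)$ (tail having no further $b$'s) contributes $\delta_n = d^2-d^3 = (x_2,f(x_2),x_3,\dots) - (x_3,f(x_3),x_3,x_4,\dots)$.

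Next, I would stratify $\CNxx_{n-1}(X)$ by the tail $\underline y=(y_1,\dots,y_{n-1})$. The type~II relations in each stratum are precisely those of Lemma~\ref{lem:compl-of-ker-diff} with $\Orb{a}=\Orb{y_1}$: iterating the relation $(y_0,\underline y)\equiv(f(y_0),\underline y)-(f(y_1),\underline y)$ (and using $(y_1,\underline y)=0$ once $f^k(y_0)=y_1$) collapses the stratum to $\Z_{k_{y_1}}\cdot(f(y_1),\underline y)$; type~III sharpens the torsion to $\ell$. Applying these stratum-wise collapses to the type~Ib relation $\delta_n\equiv 0$ yields
\begin{equation*}
	(f^2(x_2),f(x_2),x_3,\dots,x_n) \equiv (f^2(x_3),f(x_3),x_3,x_4,\dots,x_n)
\end{equation*}
in $\coker\delta_n$, identifying any ``depth-one'' class $(f(y_1),y_1,y_2,\dots)$ with $y_1$ non-initial (write $y_1=f(x_2)$ for some $x_2\neq y_2$; if only $x_2=y_2$ works then the depth-one and depth-two forms coincide literally) with the ``depth-two'' class $(f^2(y_2),f(y_2),y_2,\dots)$. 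Moreover, the special type~Ib chain $(v^i,b,w,y_2,y_3,\dots,x_n)$ with $w\in f^{-1}(y_2)\setminus\{y_2\}$ (available exactly when $y_2$ is non-initial and not a fixed point; the remaining cases are trivial because the putative generator is already zero in the normalized complex) has $d^2$-term $(w,f(w),y_2,\dots)=(w,y_2,y_2,\dots)=0$ in the normalized complex, so $\delta_n$ reduces to $-(y_2,f(y_2),y_2,y_3,\dots)$; after the type~II collapse this forces the depth-two class to vanish whenever $y_2$ is non-initial. The surviving generators are therefore exactly $(f(x),x,\underline z)$ and $(f^2(x),f(x),x,\underline z')$ with $x$ initial, as claimed.

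The main obstacle will be (a)~the case analysis for tails containing additional $b$'s, which add extra $d^j$-terms to $\delta_n(\text{type Ib})$ and produce further cross-stratum relations that must be shown either to vanish after the type~II collapse or to be absorbed by existing relations; and (b)~verifying $\Z_\ell$-freeness of the surviving basis, which I expect to follow from an explicit retraction of $\CNxx_{n-1}(X)$ onto the proposed $\Z_\ell$-free module via a signed integer-valued invariant counting the orbit distance from $y_0$ to the distinguished prefix element in $y_1$ or $y_2$.
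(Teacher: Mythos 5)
Your plan reproduces the paper's own proof almost step for step: the stratum-wise collapse coming from the type~II generators is exactly Lemma~\ref{lem:compl-of-ker-diff}, type~III supplies the $\Z_\ell$-annihilation as in \eqref{eq:coker-rel3}, and your type~Ib relation is the paper's \eqref{eq:coker-rel1}, used first to trade a depth-one class for a depth-two class and then, with $x_3=y$ and $f(x_2)=y$, to kill the depth-two class when $y$ is non-initial. Your acknowledged obstacles (a) and (b) are precisely what the paper's final paragraph handles, by observing that the boundaries of the remaining sequences $(v^i,b,z_0,b,z_1,b,\ldots)$ are linearly independent and each eliminates exactly one further generator of the form $(z_0,f(z_0),b,\ldots)$, leaving a $\Z_\ell$-free basis --- so your sketch is on target and the deferred points, while real, are exactly the ones the paper also has to dispatch.
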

\begin{proof}
	Since $\CNxx_n(X)=0$ for $n\leqslant 1$, $\coker\delta_n=0$ as well. This agrees with
	the~statement above, as there are no such sequences of length smaller than $2$. Hence, we will
	assume $n\geqslant 2$.

	According to Lemma~\ref{lem:compl-of-ker-diff}, the generators of $\HNxy_n(X)$ of the~second type
	are crucial: they are orthogonal to $\ker\delta_n$ and their images restrict generators of $\coker\delta_n$
	to sequences $(f(y),y,\ldots)$. Type~III generators, in turn, show that the length of any cycle in
	$\Gamma_f$ annihilates $\coker\delta_n$:
	\begin{equation}\label{eq:coker-rel3}
			0 \equiv \diff\left(\sum_{i=1}^k(b,c_i,x_2,\underline z)\right)
				= k(f(x_2),x_2,\underline z),
	\end{equation}
	so that it is a~$\Z_\ell$-module. To restrict the~set of generators even further, take a~type~I
	generator with $x_1=b$ and $x_2,x_3\in X_0$ (or just $x_2\in X_0$ if $n=2$). Then
	\begin{equation}\label{eq:coker-rel1}
			0 \equiv \diff(v^i,b,x_2,x_3,\underline z)
					= (x_2,f(x_2),x_3,\underline z) - (x_3,f(x_3),x_3,\underline z)
	\end{equation}
	makes it possible to replace $(f^2(x),f(x),y,\ldots)$ with $(f^2(y),f(y),y,\ldots)$, or to kill
	$(f^2(x),f(x))$ in case $n=2$, as we did in Theorem~\ref{thm:H1-for-f-spindle}. Also, $y$ must be
	initial --- otherwise, \eqref{eq:coker-rel1} forces $(f^2(y),f(y),y,\ldots)\equiv 0$, if we pick
	$x_3=y$ and $x_2$ such that $f(x_2)=y$.
	All the~remaining relations are induced by~sequences of the~form
	\begin{equation}	
		\underline x=(v^i,b,z_0,b,z_1,b,\ldots,b,z_k,z_{k+1},\ldots),
	\end{equation}
	perhaps ending at the~$b$ before $z_k$ or at $z_k$. Because $\diff\underline x$ is independent of
	$v^i$, we can choose one particular element. Then $\diff\underline x$ determines $\underline x$
	completely, so that all these boundaries are linearly independent. Each of them allows us to
	eliminate one more sequence from the~list of generators: $(z_0,f(z_0),b,\ldots)$ can be
	expressed as a~linear sum of sequences of type $(y,f(y),y,\ldots)\equiv(f^2(y),f(y),y,\ldots)$.
	This results in the~desired presentation of $\coker\delta_n$.
\end{proof}

If $X$ is finite, every component of $\Gamma_f$ must have a~cycle. Therefore, Lemma~\ref{lem:compl-of-ker-diff}
implies that $\delta_n$, when restricted to type~II generators, is an~isomorphism over $\Q$. Therefore
it is enough to count the~other generators to find the~rank of the~distributive homology of $X$.

\begin{theorem}\label{thm:Hn-for-f-spindle-finite}
	Assume $X$ is a~finite $f$-spindle. Then its homology is given by the~formulas
	\begin{equation}\label{eq:Hn-for-f-spindle-finite}
	\setlength\arraycolsep{2pt}
	\left\{\begin{array}{rl}
			\Caugm\HN_0(X) &= \Z^{\orb(f)},\\
						\HN_1(X) &= \Z^{(\orb(f)-1)|X_0|+2\orb(f)} \oplus
												\Z_\ell^{\init(f)}, \phantom{\Big|^{(|X|-1)^n}}\\
						\HN_n(X) &= \left(\Z^{(\orb(f)-1)|X|^2+|X|} \oplus
												\Z_\ell^{\init(f)|X|}\right)^{\oplus(|X|-1)^{n-2}},\quad\textrm{for }n\geqslant 2.
	\end{array}\right.
	\end{equation}
	In particular, $\HN_{n+1}(X) = \HN_n(X)^{\oplus(|X|-1)}$ for $n\geqslant 2$.
\end{theorem}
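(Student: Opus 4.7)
The plan is to combine the splitting $\HN_n(X)\cong\ker\delta_n\oplus\coker\delta_{n+1}$ from~\eqref{eq:HN-ker-coker} with direct counts of both summands using the preceding lemmas. The cases $n=0$ and $n=1$ I handle separately: in degree zero the relation $y\equiv x\opn y$ collapses, for an $f$-spindle, to $y\equiv f(y)$, so $H_0(X)$ has one free generator per $f$-orbit of $X_0$ together with the class of $b$, and augmentation removes one to give $\Caugm\HN_0(X)=\Z^{\orb(f)}$. For $n=1$ the formula is exactly Theorem~\ref{thm:H1-for-f-spindle}.

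For $n\geqslant 2$ I first compute $\coker\delta_{n+1}$. By Lemma~\ref{lem:coker} applied in degree $n+1$, this cokernel is a free $\Z_\ell$-module with basis consisting of normalized sequences of the form $(f(x),x,y_2,\ldots,y_n)$ and $(f^2(x),f(x),x,y_3,\ldots,y_n)$ with $x\in X_0$ initial. A direct count of admissible tails---each entry chosen from $|X|-1$ values to avoid neighboring repetition---gives $\init(f)(|X|-1)^{n-1}$ generators of the first kind and $\init(f)(|X|-1)^{n-2}$ of the second, summing to $\init(f)\cdot|X|\cdot(|X|-1)^{n-2}$ copies of $\Z_\ell$.

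Next I tackle $\ker\delta_n$. Type~I and type~III generators from Lemma~\ref{lem:HNxy} are in the kernel automatically: a type~I generator starts with some $v^i\neq b$, so $d^0$ and $d^1$ produce the same sequence and cancel in $\delta_n=d^0-d^1$; for a type~III generator the differences $(c_i,\underline z)-(f(c_i),\underline z)=(c_i,\underline z)-(c_{i+1},\underline z)$ telescope around the cycle. The crucial remaining point is injectivity of $\delta_n$ on type~II generators. For this I decompose the type~II subgroup of $\HNxy_n(X)$ into blocks $V_{\underline y}$, indexed by normalized tails $\underline y=(y_0,\ldots,y_{n-2})$ with $y_0\neq b$, where $V_{\underline y}$ is spanned by $(b,x,\underline y)$ for $x\in\Orb{y_0}\setminus\{y_0\}$; each block maps into a disjoint summand $W_{\underline y}\subset\CNxx_{n-1}(X)$ spanned by $(x,\underline y)$ for the same $x$. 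Since $X$ is finite, every orbit contains a cycle, so Lemma~\ref{lem:compl-of-ker-diff} with $\Orb{a}=\Orb{y_0}$ gives injectivity of each $V_{\underline y}\to W_{\underline y}$, and hence of the full restriction of $\delta_n$ to type~II. Consequently $\ker\delta_n$ is free, generated exactly by the type~I and type~III chains.

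Counting these, in both types the sum $\sum_{v^i}(|X|-|\Orb{v^i}|)=\orb(f)|X|-|X_0|=(\orb(f)-1)|X|+1$ controls the choice of the first two (respectively cycle-labeled second) entries, while the admissible tails contribute $(|X|-1)^{n-1}$ for type~I and $(|X|-1)^{n-2}$ for type~III. Adding and factoring out $(|X|-1)^{n-2}\cdot|X|$ yields rank $((\orb(f)-1)|X|^2+|X|)(|X|-1)^{n-2}$, and combining with the cokernel produces the stated formula. The main obstacle is the bookkeeping in the injectivity step: one must verify that the block decomposition by tails is genuinely orthogonal in the normalized complex (carefully handling the $x\neq y_0$ exclusion so that $V_{\underline y}$ and $W_{\underline y}$ match the setup of Lemma~\ref{lem:compl-of-ker-diff} exactly) and that images of distinct blocks do not interact, so that the local injectivity assembles into a global one.
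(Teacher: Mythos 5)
Your proposal follows the same route as the paper --- the splitting $\HN_n(X)\cong\ker\delta_n\oplus\coker\delta_{n+1}$ from~\eqref{eq:HN-ker-coker} together with Lemmas~\ref{lem:HNxy}, \ref{lem:coker} and~\ref{lem:compl-of-ker-diff} --- and your bookkeeping matches Table~\ref{tab:Hn-generators}. However, your treatment of $\ker\delta_n$ is flawed: you identify $\delta_n$ with $d^0-d^1$, but that is the differential of the quotient complex $\CNxy(X)$, not the connecting homomorphism. The connecting map is induced by the \emph{full} differential of $\CN(X)$, so on the representing cycles it is governed by the higher faces, $\delta_n([a])=\sum_{i\geqslant 2}(-1)^id^ia$. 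It is therefore false that type~I and type~III generators lie in $\ker\delta_n$ automatically: for instance
\begin{equation*}
	\delta_n\bigl[(v^i,b,x_2,x_3,\ldots,x_n)\bigr]=(x_2,f(x_2),x_3,\ldots,x_n)-(x_3,f(x_3),x_3,x_4,\ldots,x_n),
\end{equation*}
and $\delta_n\bigl[\sum_{i=1}^k(b,c_i,x_2,\ldots,x_n)\bigr]=k\,(f(x_2),x_2,x_3,\ldots,x_n)$, both generically nonzero; these are exactly the boundaries used in the proof of Lemma~\ref{lem:coker} (relations~\eqref{eq:coker-rel1} and~\eqref{eq:coker-rel3}). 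Your telescoping argument for type~III likewise forgets the surviving $d^2$-face, so $\ker\delta_n$ is not spanned by the type~I and type~III chains.

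The rest of your argument already contains the repair. Your block decomposition by tails shows $\delta_n$ restricted to the span of type~II generators is injective, with each $V_{\underline y}\to W_{\underline y}$ an injection of free abelian groups of equal rank $|\Orb{y_0}|-1$, and the blocks $W_{\underline y}$ exhaust $\CNxx_{n-1}(X)$. Hence $\delta_n$ restricted to the type~II span becomes an isomorphism over $\Q$ --- which is precisely what the paper invokes. Since $\HNxy_n(X)$ is free (Lemma~\ref{lem:HNxy}) and $\im\delta_n\subset\CNxx_{n-1}(X)$ is free, rank-nullity gives that $\ker\delta_n$ is free of rank $\rk\HNxy_n(X)-\rk\CNxx_{n-1}(X)$, equal to the number of type~I plus type~III generators. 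This recovers your count without the untenable claim that $\ker\delta_n$ is generated by those chains; replacing that step with the rank argument makes your proof valid.
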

\begin{proof}
	Clearly, $\rk\Caugm\HNxy_0(X) = \orb(f)$, since the~only possible generators are $(v^i)$. For
	higher $n$, the~generators are counted in Tab.~\ref{tab:Hn-generators}. The~last two rows
	correspond to the~torsion part. Summing them up results in~formula \eqref{eq:Hn-for-f-spindle-finite}.
\end{proof}

\begin{table}
	\renewcommand{\arraystretch}{2}
	\begin{tabular}{rl|cr}
		\hline\hline
		\multicolumn{2}{c|}{Type of generators} & $n=1$ & \multicolumn{1}{c}{$n\geqslant 2$} \\
		\hline\hline
		$(v^i,x,\dots)$, & $x\in X_0$		& $(\orb(f)-1)|X_0|$ & $(\orb(f)-1)(|X|-1)^{n\phantom{-1}}$ \\
		$(v^i,b,\dots)$\phantom{,}	 &	& $\orb(f)$          & $\orb(f)(|X|-1)^{n-1}$ \\
		$\displaystyle{\sum_{i=1}^k(b,c_i,x,\dots)}$,& $\Orb{x}\neq\Orb{c_i},x\in X_0$
																		& $\orb(f)$ & $(\orb(f)-1)(|X|-1)^{n-1}$ \\
		$\displaystyle{\sum_{i=1}^k(b,c_i,b,\dots)}$\phantom{,}&
																		& $0$       & $\orb(f)(|X|-1)^{n-2}$ \\
		\hline
		$(f(y),y,\ldots)$,        & $y$ -- initial & $\init(f)$ & $\init(f)(|X|-1)^{n-1}$ \\
		$(f^2(y),f(y),y,\ldots)$, & $y$ -- initial & 0          & $\init(f)(|X|-1)^{n-2}$ \\
		\hline\hline
	\end{tabular}
	\vskip 0.5\baselineskip
	\caption{Numbers of generators in $\HN_n(X)$.}\label{tab:Hn-generators}
\end{table}

We can enhance the~theorem above by giving an~actual presentation of homology, including the~case
of infinite $f$-spindles. Indeed, since $\im\delta_n$ is a~free group, there is a~decomposition
$\HNxy_n(X) = \ker\delta_n\oplus V_n$ with $V_n\cong\im\delta_n$ and we can naturally identify
$\ker\delta_n$ with $\HNxy_n(X)/V_n$. To construct such a~$V_n$, we first assume $v^i$ belongs to
a~cycle, if its orbit has one, and we choose a~section $g\colon f(X_0)\to X_0$ of $f$. Furthermore,
if $\ell\neq 0$, we choose cycles $\underline c^1,\ldots,\underline c^r$ and nonzero numbers
$\alpha_1,\ldots,\alpha_r$ such that $\sum_{i=1}^r \alpha_ik^i = \ell$, where $k^i$ is the~length
of the~cycle $\underline c^i$. We then use the~chosen cycles to construct a~\emph{base cycle}
\begin{equation}
	\basecycle := \sum_{i=1}^r\alpha^i(c^i_1+\ldots+c^i_{k^i}).
\end{equation}
Notice, that $\diff(b,\basecycle,x_2,\ldots,x_n)	= \ell \cdot (f(x_2),x_2,\ldots x_n)$.

\begin{lemma}
	Fix an~element $v^0$ from among $v^i$'s and let $V_n\subset\HNxy_n(X)$ be generated by the~sequences
	\begin{enumerate}
		\item $(b,x_1,\ldots,x_n)$ with $\Orb{x_1}=\Orb{x_2}$, unless $x_1=v^i$ or $x_1=f(v^i)$,
					if already $x_2=v^i$, and
		\item $(v^0,b,g(y),x_3,\ldots,x_n)$ with $x_3=b$ or $y\neq f(x_3)$, and
		\item if $\Gamma_f$ has cycles, chains $(b,\basecycle,x_2,\ldots,x_n)$ with initial $x_2$ or
					$x_2=f(x_3)$ and initial $x_3$.
	\end{enumerate}
	Then $\delta_n|_{V_n}$ is injective and $\delta_n(V_n) = \im\delta_n$.
\end{lemma}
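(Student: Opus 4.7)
The plan is to show that the~restriction $\delta_n|_{V_n}$ is injective with image equal to all of $\im \delta_n$, yielding the~splitting $\HNxy_n(X) = \ker \delta_n \oplus V_n$ needed in~\eqref{eq:HN-ker-coker}. First I would note that $\im \delta_n$, as a~subgroup of the~free abelian group $\CNxx_{n-1}(X)$, is itself free, so the~key issue is matching $V_n$-generators bijectively with a~basis of $\im \delta_n$. By Lemma~\ref{lem:coker}, $\im \delta_n$ is precisely the~kernel of the~projection onto the~$\Z_\ell$-module generated by sequences $(f(x), x, \ldots)$ and $(f^2(x), f(x), x, \ldots)$ with $x$ initial; equivalently, $\im \delta_n$ is spanned by all $\CNxx_{n-1}$-generators outside this distinguished list together with $\ell$-multiples of the~distinguished ones.

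Next I would compute $\delta_n[a] = \sum_{i=2}^n (-1)^i d^i a$ on each of the~three families of $V_n$-generators and extract a~leading term. For a~type~(1) generator $(b, x_1, \ldots, x_n)$ with $\Orb{x_1}=\Orb{x_2}$, the~$i=2$ term contributes $(f(x_2), x_2, x_3, \ldots, x_n)$, since $b \opn x_2 = f(x_2)$ while $x_1 \opn x_2 = x_2$. For a~type~(2) generator $(v^0, b, g(y), x_3, \ldots, x_n)$, the~$i=2$ term is $(g(y), y, x_3, \ldots, x_n)$, using $f(g(y)) = y$. For a~type~(3) generator $(b, \basecycle, x_2, \ldots, x_n)$, the~$i=2$ contribution collapses, as already observed just before the~lemma statement, to $\ell \cdot (f(x_2), x_2, \ldots, x_n)$. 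The~remaining terms $d^i$ with $i \geqslant 3$ only reshuffle entries further down the~sequence and can be organized by a~triangular ordering induced by the~position at which $f$ is first applied nontrivially.

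I would then establish surjectivity by matching these leading terms against the~description of $\im \delta_n$ coming from Lemma~\ref{lem:coker}. Generic $\CNxx$-sequences $(z_0, z_1, \underline w)$ with $z_0 = f(z_1)$ and $z_1$ non-initial arise as leading terms of type~(1) generators; the~remaining generic sequences (those with $z_1 = f(z_0)$, corresponding to $z_0 = g(z_1)$) are covered by type~(2) generators; and the~$\ell$-multiples of the~distinguished cokernel generators are realized through the~type~(3) generators via the~base cycle. Injectivity then follows from the~triangular structure: distinct $V_n$-generators possess distinct leading terms under $\delta_n$, so no nontrivial combination can vanish.

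The~main obstacle will be the~meticulous case analysis justifying the~exclusions in the~three families, namely the~``$x_1 = v^i$ or $x_1 = f(v^i)$ when $x_2 = v^i$'' exception in~(1), the~``$x_3 = b$ or $y \neq f(x_3)$'' restriction in~(2), and the~initiality conditions in~(3). Each excluded generator must be shown to coincide, modulo $\ker \delta_n$, with a~retained generator or else already lie in $\ker \delta_n$; this is precisely the~combinatorial content of iterating Lemma~\ref{lem:compl-of-ker-diff}, paralleling the~orbit-by-orbit argument used in the~proof of Lemma~\ref{lem:coker}.
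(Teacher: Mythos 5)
Your plan has the right overall shape — split $\delta_n|_{V_n}$ into injectivity and surjectivity onto $\im\delta_n$, and match $V_n$-generators to a basis of $\im\delta_n$ — but the central computation is carried out with the wrong differential, and this undermines the ``triangular'' injectivity argument.

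You compute $\delta_n[a]=\sum_{i=2}^n(-1)^id^ia$, following the formula displayed after \eqref{eq:LES-CNxx-CN-CNxy}, and then read off a ``leading term'' as the $i=2$ contribution. But the connecting homomorphism of the short exact sequence $0\to\CNxx\to\CN\to\CNxy\to 0$ is the full boundary $\diff=\sum_{i\geqslant 0}(-1)^id^i$ applied to the obvious lift, and the $d^0-d^1$ part does \emph{not} vanish on type~(1) generators: for $(b,x_1,\underline y)$ with $y_0=x_2$ in the orbit of $x_1$ one gets
\begin{equation*}
  \delta_n(b,x_1,\underline y)=(x_1,\underline y)-(f(x_1),\underline y)+(f(y_0),\underline y),
\end{equation*}
and all three summands lie in $\CNxx_{n-1}(X)$. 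The formula you used keeps only the last term $(f(y_0),\underline y)$, which is independent of $x_1$. Consequently, under your computation every type~(1) generator with the same tail $(x_2,\ldots,x_n)$ has the \emph{same} ``leading term'' $(f(x_2),x_2,\ldots,x_n)$, so distinct $V_n$-generators do not have distinct leading terms and your claimed triangular injectivity argument collapses precisely where it is most needed. (The display $\delta_n([a])=\sum_{i\geqslant2}(-1)^id^ia=\diff a$ in the paper is only valid when $(d^0-d^1)(a)=0$ exactly, which holds for type~I and type~III generators of Lemma~\ref{lem:HNxy} but fails for type~II — and the type~(1) generators of $V_n$ are exactly type~II.)

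The paper's proof avoids the triangularity issue entirely by invoking Lemma~\ref{lem:compl-of-ker-diff} orbit by orbit: for a fixed tail $\underline y$, the map $(b,x_1,\underline y)\mapsto\diff(b,x_1,\underline y)$ is injective on the span over $x_1\in\Orb{y_0}$ with cokernel a cyclic group generated by $(f(y_0),\underline y)$, and after deleting one generator per cycle the cokernel becomes free. The type~(2) and type~(3) generators are then matched against the relations from the proof of Lemma~\ref{lem:coker} to hit the remainder of $\im\delta_n$. A second weak point in your proposal is the dichotomy ``$z_0=f(z_1)$ versus $z_1=f(z_0)$'': a generic sequence in $\CNxx_{n-1}(X)$ only requires $\Orb{z_0}=\Orb{z_1}$, and on the dendrite part of an orbit neither of your two conditions needs to hold, so your surjectivity matching does not cover $\im\delta_n$. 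To repair the argument you would need to work with the full $\diff$, keep the tail $\underline y$ fixed, and run the cokernel computation of Lemma~\ref{lem:compl-of-ker-diff} rather than an ordering by the position of the first nontrivial $f$.
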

\begin{proof}
	Injectivity follows from Lemma~\ref{lem:compl-of-ker-diff} and carefully choosing the~other
	generators. Indeed, since we removed one sequence $(b,x_1,x_2,\ldots)$ for every cycle in
	the~orbit of $x_2$, the~quotient by the~first group of generators is freely generated by
	sequences $(f(y),y,\ldots)$. Then, as seen in the~proof of Lemma~\ref{lem:coker}, every
	sequence $(v^0,b,g(y),x_3,\ldots,x_n)$ lowers the~rank of the~cokernel by one and each chain from
	the~last group turns one of the~remaining generators into torsion of order $\ell$. This also
	shows $\delta_n(V_n)=\im\delta_n$.
\end{proof}

\begin{corollary}
	Let $X$ be an~$f$-spindle, not necessarily finite. Construct $V_n$ as above and choose a~cycle
	$\underline c^0$, if $\Gamma_f$ has one. Then the~generators of the~free part of $\HN_n(X)$
	are given modulo $V_n$ by the~following chains:
	\begin{enumerate}
		\item sequences $(b,f(v^i),v^i,x_3,\ldots,x_n)$ and $(b,v^i,x_2,\ldots,x_n)$ with $\Orb{v^i}=\Orb{x_2}$, and
		\item sequences $(v^i,x_1,\ldots,x_n)$, with $\Orb{v^i}\neq\Orb{x_1}$ and $x_1\neq b$, and
		\item sequences $(v^i,b,x_2,\ldots,x_n)$ with $v^i\neq v^0$ or $x_2\notin g(X_0')$, and
		\item sums $\displaystyle{\sum_{i=1}^k(b,c_i,x_2,\ldots,x_n)}$, one per cycle
					$(c_1,\ldots,c_k)$ from a~different orbit than $x_2$, except $\underline c^0$, when
					$x_2$ is initial or $x_2=f(x_3)$ and $x_3$ is initial.
	\end{enumerate}
	The~torsion subgroup\footnote{
		If $\ell=0$, these generators also contribute to the~free part and there is no torsion.
		In the~other extreme case $\ell=1$ the~torsion subgroup is trivial.
	} of $\HN_n(X)$ is a~$\Z_\ell$-module generated by sequences 
	$(f(y),y,x_2,\ldots,x_n)$ and $(f^2(y),f(y),y,x_3,\ldots,x_n)$, where $y$ is initial.
\end{corollary}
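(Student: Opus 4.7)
The strategy is to apply the splitting \eqref{eq:HN-ker-coker}, $\HN_n(X) \cong \ker\delta_n \oplus \coker\delta_{n+1}$. The preceding lemma states that $\delta_n|_{V_n}\colon V_n \to \im\delta_n$ is a bijection, which gives $\ker\delta_n \cong \HNxy_n(X)/V_n$; and since $\HNxy_n(X)$ is free by Lemma~\ref{lem:HNxy}, so too is this quotient. Consequently, the free part of $\HN_n(X)$ is obtained by listing the generators of $\HNxy_n(X)$ that survive modulo $V_n$, while the torsion (and, when $\ell=0$, additional free summands) comes directly from $\coker\delta_{n+1}$ and is handled by Lemma~\ref{lem:coker}.

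To produce the free part, I would walk through the three families of generators of $\HNxy_n(X)$ from Lemma~\ref{lem:HNxy} and, for each one, peel off the portion lying in $V_n$. Type~II generators $(b,x_1,x_2,\ldots)$ with $\Orb{x_1}=\Orb{x_2}$ are sent into $V_n$ by clause~(1) of the preceding lemma, except for the two sub-families $x_1=v^i$ and $x_1=f(v^i)$ (with $x_2=v^i$) explicitly carved out; these remaining chains form item~(1) of the corollary. Type~I generators $(v^i,x_1,\ldots)$ with $x_1\neq b$ are entirely untouched by $V_n$, giving item~(2). Those with $x_1=b$ are partially absorbed by clause~(2), and the surviving representatives match item~(3). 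Finally, type~III sums are unaffected except for the base-cycle chains from clause~(3), producing item~(4).

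For the torsion, I would invoke Lemma~\ref{lem:coker}: its proof does not actually use finiteness of $X$, since the argument is purely about relations between fixed sequences. It gives $\coker\delta_{n+1}$ as the free $\Z_\ell$-module on $(f(y),y,\ldots)$ and $(f^2(y),f(y),y,\ldots)$ for initial $y$, which is precisely the claim. The edge cases in the footnote then follow automatically: when $\ell=0$ there are no relations, so these generators contribute to the free part; when $\ell=1$ the group is trivial.

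The main obstacle is the combinatorial bookkeeping in the second paragraph: although conceptually each clause of $V_n$ cancels against one family of generators of $\HNxy_n(X)$, the asymmetry introduced by fixing $v^0$ and by choosing a section $g$ of $f$ makes the precise correspondence fiddly. One delicate point is that the ``clean'' basis listed in items~(1)--(4) may not coincide on the nose with the complement of $V_n$ inside the Lemma~\ref{lem:HNxy} basis; rather, it is obtained after a mild change of basis within a free abelian group, using the relations already exploited in Lemma~\ref{lem:coker}. A further subtlety is that for orbits which are infinite directed trees the base cycle $\basecycle$ is undefined, so item~(4) must be read vacuously and the torsion generators become free \textemdash{} a case that must be verified separately but presents no additional difficulty.
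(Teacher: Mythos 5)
Your proposal reconstructs exactly the argument the paper intends but does not write out: apply the splitting $\HN_n(X)\cong\ker\delta_n\oplus\coker\delta_{n+1}$, identify $\ker\delta_n$ with $\HNxy_n(X)/V_n$ via the preceding lemma, read off the torsion from Lemma~\ref{lem:coker}, and match the generators of Lemma~\ref{lem:HNxy} against the defining clauses of $V_n$ to obtain items (1)--(4). Your running correspondence (Type~II minus clause (1) giving item (1), Type~I with $x_1\neq b$ untouched giving item (2), Type~I with $x_1=b$ minus clause (2) giving item (3), Type~III minus clause (3) giving item (4)) is the right bookkeeping, and your observation that Lemma~\ref{lem:coker} is finiteness-free is correct and necessary. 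One small correction: $\basecycle$ is a single chain built from cycles chosen globally across $\Gamma_f$, not per orbit, so the case where item (4) is vacuous is precisely $\ell=0$ (no cycles anywhere), not merely the presence of tree orbits; but this does not affect the argument, since that is also what makes the footnote's edge case kick in.
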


\noindent
If $X$ is finite, this presentation is coherent with Theorem~\ref{thm:Hn-for-f-spindle-finite}:
although we restrict free generators in the~last two groups, we include the~same number of
generators in the~first group that have not been counted before.

\section{Odds and ends}\label{sec:odds-ends}
\subsection*{Relative homology}
If $X$ is an~$f$-spindle, then $X_0=X-\{b\}$ is a~trivial spindle (i.e. $x\opn y = y$), so that
$\HN(X_0) = \CN(X_0)$. This makes it easy to compute the~relative homology $\HN(X,X_0)$. Indeed,
a~long exact sequence
\begin{equation}
	\ldots\longrightarrow\CN_n(X_0)
				\stackrel{i_n}
				\longrightarrow\HN_n(X)
				\longrightarrow\HN_n(X,X_0)
				\longrightarrow\CN_{n-1}(X_0)
				\stackrel{i_{n-1}}
				\longrightarrow\ldots
\end{equation}
implies $\HN_n(X,X_0)\cong \ker i_{n-1}\oplus \HN_n(X)/\im i_n$, since $\ker i_{n-1}$ is free.
Hence, we can obtain a~presentation for $\HN_n(X,X_0)$ as follows:
\begin{enumerate}
	\item Take a~presentation for $\HN_n(X)$.
	\item Remove generators $(v^i,\underline x)$ with $\underline x\in\CN_{n-1}(X_0)$.
				Notice that this kills both free and torsion generators.
	\item Add free generators coming from $\ker i_{n-1}$.
\end{enumerate}
Although this procedure results in a~presentation of relative homology, it misses a~very nice
structure of these groups. Every sequence from $\CN(X,X_0)$ can be written uniquely as
$(\underline x,b,\underline y)$, where each $y_i\neq b$ (both $\underline x$ and $\underline y$
might be empty). Because $b\opn y_i\neq b$, higher faces vanish so that in the~quotient
complex we have
\begin{equation}\label{eq:diff-CXX0}
	\diff(\underline x,b,\underline y) = \begin{cases}
			\phantom{(\diff\underline x)} 0, &\textrm{if }\underline x=\emptyset\textrm{ or }\underline x=(x_0),\\
			(\diff\underline x,b,\underline y)&\textrm{otherwise}.
	\end{cases}
\end{equation}
In particular, the~sequence $\underline y$ is preserved. This proves a~decomposition
\begin{equation}\label{eq:decomp-relative-into-b}
	\CN_{n+1}(X,X_0) = \bigoplus_{p+q=n}\Cb_p(X)\otimes\Caugm\CN_q(X_0),
\end{equation}
where $\Cb(X)$ is spanned by sequences ending with $b$. Notice that the~differential in $\Caugm\CN(X_0)$
is trivial, so the~formula above shows $\CN(X,X_0)$ is a~shifted total complex of the~bicomplex
$\Cb(X)\otimes\Caugm\CN(X_0)$.

To compute $\Hb_p(X)$ we note that the~normalized complex $\CN(X)$ splits into two copies
of $\Cb(X)$. Indeed, consider the~homomorphism $h\colon\CN(X)\to\CN(X)[1]$ given by
$h(\underline x) = (\underline x,b)$. It commutes with differentials\footnote{
	Recall that in the~shifted complex $C[1]_n=C_{n+1}$ and $\diff[1]_n = -\diff_{n+1}$ changes sign.
}
and $\Cb(X)=\ker h$. Moreover, the~image of $h$ is the~shifted reduced complex $\Caugm\Cb(X,b)[1]$,
because we can use $h$ to obtain all sequences except $(b)$. Finally, the~short exact sequence
\begin{equation}\label{eq:seq-Cb-CN-Cb}
	0	\longrightarrow\Cb(X)
		\longrightarrow\CN(X)
		\stackrel{h}\longrightarrow\Cb(X,b)[1]
		\longrightarrow 0
\end{equation}
splits via a~homomorphism $u\colon\Cb(X,b)[1]\to\CN(X)$ that forgets the~$b$ standing at the~end.
Hence, $\HN_n(X)\cong\Hb_n(X)\oplus\Caugm\Hb_{n+1}(X)$ and $\Hb_n(X)=\ker h_*$ is generated by
classes represented by sequences with $b$ at the~end. This, together with
\eqref{eq:decomp-relative-into-b}, results in another presentation for $\HN(X,X_0)$.

We finish this part by computing $\Hb(X)$ for a~finite $X$. This can be easily done using
the~split exact sequence~\eqref{eq:seq-Cb-CN-Cb} and Theorem~\ref{thm:Hn-for-f-spindle-finite}.

\begin{proposition}
	Assume $X$ is a~finite $f$-spindle. Then
	\begin{equation}
	\setlength\arraycolsep{2pt}
	\left\{\begin{array}{rl}
		\Hb_0(X) &= \Z,\\
		\Hb_1(X) &= \Z^{\orb(f)}, \phantom{\Big|^{(|X|-1)^n}}\\
		\Hb_n(X) &= \left(
												\Z^{\orb(f)|X|-|X|+1} \oplus
												\Z_\ell^{\init(f)}
										\right)^{\oplus(|X|-1)^{n-2}},\quad\textrm{for }n\geqslant 2.
	\end{array}\right.
	\end{equation}
\end{proposition}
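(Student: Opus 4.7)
My plan is to combine the split short exact sequence~\eqref{eq:seq-Cb-CN-Cb} with Theorem~\ref{thm:Hn-for-f-spindle-finite} and proceed by induction on $n$. The splitting produces the direct sum decomposition
\begin{equation*}
\HN_n(X) \cong \Hb_n(X) \oplus \Caugm\Hb_{n+1}(X),
\end{equation*}
stated in the text immediately before the proposition, and once this is in hand the remaining work is purely algebraic: each $\Hb_n(X)$ is determined by reading off the complementary summand of the already-known $\HN_n(X)$.

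First I would compute $\Hb_0(X)$ directly from the chain complex: the only length-$1$ sequence ending in $b$ is $(b)$, so $\Cb_0(X) = \Z$, and since $\diff_0 = 0$ we obtain $\Hb_0(X) = \Z$. Because augmentation affects only degrees $-1$ and $0$, we have $\Caugm\Hb_{n+1}(X) = \Hb_{n+1}(X)$ for every $n \geqslant 0$, so the decomposition simplifies to $\HN_n(X) = \Hb_n(X) \oplus \Hb_{n+1}(X)$. Setting $n=0$ together with $\HN_0(X) = \Z \oplus \Caugm\HN_0(X) = \Z^{\orb(f)+1}$, which comes from~\eqref{eq:augm-H-vs-H} and Theorem~\ref{thm:Hn-for-f-spindle-finite}, forces $\Hb_1(X) = \Z^{\orb(f)}$. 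Setting $n=1$ and extracting $\Hb_2(X)$ as the complement of $\Hb_1(X)$ in $\HN_1(X) = \Z^{(\orb(f)-1)|X_0|+2\orb(f)} \oplus \Z_\ell^{\init(f)}$ yields $\Hb_2(X) = \Z^{(\orb(f)-1)|X_0|+\orb(f)} \oplus \Z_\ell^{\init(f)}$, which simplifies to $\Z^{\orb(f)|X|-|X|+1} \oplus \Z_\ell^{\init(f)}$ via $|X_0| = |X|-1$, matching the proposition for $n=2$.

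Finally I would induct on $n \geqslant 2$. Assuming the claimed form of $\Hb_n(X)$, the relation $\HN_n(X) = \Hb_n(X) \oplus \Hb_{n+1}(X)$ combined with the formula $\HN_n(X) = (\Z^{(\orb(f)-1)|X|^2+|X|} \oplus \Z_\ell^{\init(f)|X|})^{\oplus(|X|-1)^{n-2}}$ from Theorem~\ref{thm:Hn-for-f-spindle-finite} lets one solve for $\Hb_{n+1}(X)$ as the $(|X|-1)$-fold direct sum of the same block, the rank and torsion computations reducing to the identities $(\orb(f)-1)|X|^2+|X|-(\orb(f)|X|-|X|+1) = (|X|-1)(\orb(f)|X|-|X|+1)$ and $\init(f)|X| - \init(f) = (|X|-1)\init(f)$. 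There is no real obstacle here; the only point requiring care is the augmentation bookkeeping at $n=0$, which is what makes the base of the induction come out right.
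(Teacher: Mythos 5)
Your proof is correct and takes essentially the same route as the paper: compute $\Hb_0(X)$ directly, then feed the split short exact sequence~\eqref{eq:seq-Cb-CN-Cb} into Theorem~\ref{thm:Hn-for-f-spindle-finite} and run an induction, extracting $\Hb_{n+1}(X)$ as the complement of $\Hb_n(X)$ in $\HN_n(X)$. The only difference is presentational; in particular, you even arrive at the same algebraic identity $(\orb(f)-1)|X|^2+|X|-(\orb(f)|X|-|X|+1)=(|X|-1)(\orb(f)|X|-|X|+1)$ that carries the induction step in the paper.
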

\begin{proof}
	Clearly, $\Hb_0(X)=\Z$, generated by $(b)$. Directly from \eqref{eq:seq-Cb-CN-Cb} we compute that
	\begin{align}
		\rk\Hb_1(X) &= \rk\HN_0(X)-\rk\Hb_0(X) = \orb(f),							\textrm{ and}\\
		\rk\Hb_2(X) &= \rk\HN_1(X)-\rk\Hb_1(X) = \orb(f)|X|-(|X|-1).
	\end{align}
	$\HN_0(X)$ is free, so is $\Hb_1(X)$, and the~torsion subgroup of $\Hb_2(X)$ is equal to the~one
	of $\HN_1(X)$. For higher $n$ we use induction:
	\begin{equation}
		\begin{split}
			\rk\Hb_{n+3}(X) &= \rk\HN_{n+2}(X)-\rk\HN_{n+2}(X) \\
											&= (|X|-1)^n\big( (\orb(f)-1)|X|^2 + |X| - \orb(f)|X| + |X| - 1\big) \\
											&= (|X|-1)^n\big( \orb(f)|X|(|X|-1) - (|X|^2 - 2|X| + 1) \big) \\
											&= (|X|-1)^{n+1}\big( \orb(f)|X| - |X|+1 \big), \quad n\geqslant 0.
		\end{split}
	\end{equation}
	Torsion is even simplier to check.
\end{proof}

\subsection*{Realization of any finite abelian group}
We prove that every finite abelian group can be realized as the~torsion subgroup of $H_1(X)$ for
some spindle $X$. For this, we will first generalize Definition~\ref{def:f-spindle} to several
functions, see \cite{PrzSik-distr-hom}.

\begin{definition}\label{def:multi-f-spindle}
	Choose a~family of sets $\{X_i\}_{i\in I}$, not necessarily finite, and functions
	$f_i\colon X_i\to X_i$. Define the~spindle product on $X:=\coprod_{i\in I} X_i$
	for $x\in X_i$ and $y\in X_j$ by the~formula
	\begin{equation}
			x\opn y := \begin{cases}
												 y, &\textrm{if }i=j,\\
										f_j(y), &\textrm{if }i\neq j.
									\end{cases}
	\end{equation}
	Subsets $X_i\subset X$ are called \emph{blocks} of the~spindle $X$ and $f_i$'s are called
	\emph{block functions}. We will write $f\colon X\to X$ for the~function induced by all
	block functions.
\end{definition}

\begin{example}
Consider an~$f$-spindle which has two blocks, $X_0$ and $\{b\}$. The~block functions are given by
$f\colon X_0\to X_0$ and a~constant function on $\{b\}$.
\end{example}

From now on we assume $X$ has a~one-element block $\{b\}$. Then for every other block $X_i$,
the~sum $X_i^+:=X_i\sqcup\{b\}$ is an~$f_i$-spindle that is a~rectract of $X$, where the~retraction
$r\colon X\to X_i^+$ is the~identity on $X_i$ and maps everything else onto $b$. Hence, $\CN(X_i^+)$
is a~direct summand of $\CN(X)$.

\begin{theorem}\label{thm:all-finite-groups}
	Assume a~block spindle $X$ has a~one-element block $\{b\}$. Then
	\begin{equation}\label{eq:H1-more-torsion}
		H_1(X) \cong F\oplus\bigoplus_{i\in I}H_1(X_i^+),
	\end{equation}
	where $F$ is a~free abelian group of rank $\sum_{i\neq j} orb(f_i)|X_j| $. In~particular, every
	finite abelian group can be realized as the~torsion subgroup of $H_1(X)$ for some spindle $X$.
\end{theorem}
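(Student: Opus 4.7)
The plan is to exploit the retractions $p_i\colon X\to X_i^+$ that are the identity on $X_i\cup\{b\}$ and send every other element to $b$. Each $p_i$ is a~spindle homomorphism (a routine check from the block structure, using that the~block function on the~singleton $\{b\}$ is forced to be the~identity), and for $i\neq j$ the~composition $p_j\circ\iota_i$ collapses $X_i\cup\{b\}$ onto $b$, so any normalized chain in $\CN(X_i^+)$ of positive degree becomes a sequence of $b$'s and hence is degenerate. Therefore $(p_j)_\ast(\iota_i)_\ast=0$ on $\HN$ in positive degrees, and the map $\Phi:=\bigl((p_i)_\ast\bigr)_{i\in I}\colon H_1(X)\to\bigoplus_{i\in I}H_1(X_i^+)$ is split by $\Psi\colon(\alpha_i)\mapsto\sum_i(\iota_i)_\ast\alpha_i$, yielding $H_1(X)\cong\bigoplus_{i\in I}H_1(X_i^+)\oplus\ker\Phi$. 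It remains to identify $\ker\Phi$ with $F$.

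For distinct $i,j\in I$ and elements $x\in X_i$, $y\in X_j$, set $z_{x,y}:=(x,y)-(x,b)-(b,y)$. Since $x\opn b=b$, one computes $\diff z_{x,y}=(y-f_j(y))-0-(y-f_j(y))=0$, so $z_{x,y}$ is a cycle. It lies in $\ker\Phi_\sharp$: under $p_i$ the first two terms cancel and the third becomes degenerate, symmetrically under $p_j$, and for $k\neq i,j$ every term becomes degenerate. The canonical chain-level projection $c\mapsto c-\Psi_\sharp\Phi_\sharp c$ sends each cross-block basis pair $(x,y)$ to exactly $z_{x,y}$ while annihilating every same-block pair, so $\ker\Phi_\sharp$ in degree~$1$ is freely spanned by $\{z_{x,y}\}$, and $\ker\Phi$ is generated by their homology classes.

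Finally, I would establish the reduction $[z_{x,y}]=[z_{f_i(x),y}]$ via the explicit boundary $\diff\bigl((b,x,y)-(b,x,b)\bigr)=z_{x,y}-z_{f_i(x),y}$, computed using $\diff(b,x,y)=(x,y)-(f_i(x),y)$ and $\diff(b,x,b)=(x,b)-(f_i(x),b)$ in the normalized complex, noting $(b,x,y)-(b,x,b)\in\ker\Phi_\sharp$ since both triples map to $(b,x,b)$ under $p_i$ and become degenerate elsewhere. Iterating reduces the generators of $\ker\Phi$ to $\{[z_{v,y}]:v\text{ an }f_i\text{-orbit representative in }X_i,\ y\in X_j,\ i\neq j\}$, bounding its rank by $\sum_{i\neq j}\orb(f_i)|X_j|$. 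The main obstacle is showing this bound is sharp --- that these classes are linearly independent and no further relations hold --- and I would address it by a Lemma~\ref{lem:compl-of-ker-diff}-style analysis of $\ker\Phi_\sharp$ in degree~$2$, presenting every such chain canonically as a combination of triples $(b,x,y)-(b,x,b)$ (whose boundaries give exactly the~orbit relation) and chains in the~$\Psi_\sharp$-summand (which are already absorbed into $\bigoplus_i H_1(X_i^+)$). The~realization claim is then immediate: picking each block $X_i=X_{\sigma_{m_i,r_i}}$ from the~corollary after Theorem~\ref{thm:H1-for-f-spindle} makes $H_1(X_i^+)$ contribute $\Z_{m_i}^{r_i}$ to the~torsion, and the~structure theorem expresses any finite abelian group as such a direct sum.
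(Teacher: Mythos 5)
Your proposal is correct and is, up to repackaging, the same argument as the paper's. The paper also works from the retractions $r_i\colon X\to X_i^+$, but applies them at the chain level to split the reduced complex as $C(X,b)\cong Q\oplus\bigoplus_i C(X_i^+,b)$; the complement $Q$ is exactly the image of your chain-level idempotent $\pi=\id-\Psi_\sharp\Phi_\sharp$, your cycles $z_{x,y}$ are exactly the projections $\pi(x,y)$ of the cross-block pairs, and $Q_0=0$ is what makes all of them automatically cycles. Where you flag the sharpness of the rank bound as the remaining obstacle, the paper disposes of it by noting that $\diff(x,y,z)$ in $Q$ is either $0$ or of the form $(y,z)-(f(y),z)$ --- a pure difference of basis elements of $Q_1$ --- so the quotient is the free group on orbit classes and no torsion or extra collapsing can occur; this is exactly the step your ``Lemma~\ref{lem:compl-of-ker-diff}-style analysis'' would formalize. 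One small clarification: your sketch speaks of writing degree-$2$ chains of $\ker\Phi_\sharp$ as combinations of triples $(b,x,y)-(b,x,b)$ plus chains in the $\Psi_\sharp$-summand, but the latter are complementary to $\ker\Phi_\sharp$; what you actually need (and what the computation gives) is that \emph{every} $\pi(x,y,z)$, not just those of the special form $(b,x,y)-(b,x,b)$, has boundary equal to some $z_{y,z}-z_{f(y),z}$ or zero, so no new relations arise.
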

\begin{proof}
	We will assume there are at least two blocks different than $\{b\}$ --- otherwise the~statement
	is trivial. Since $H_1(X)=H_1(X,b)$, we will compute reduced homology. Each of $C(X_i^+,b)$ is
	still a~direct summand of $C(X,b)$, but now they have trivial intersections: no two of them
	have a~generator in common. This implies
	\begin{equation}
		C(X,b) \cong Q\oplus\bigoplus_{i\in I}C(X_i^+,b),
	\end{equation}
	where $Q$ is a~chain complex isomorphic to the~quotient of $C(X,b)$ by the~big direct sum. To
	compute $H_1(Q)$, we first notice that $Q_0 = 0$. Therefore, all $1$-chains are cycles and
	$H_1(Q) = \coker\diff$. Pick any sequence $(x,y,z)\in Q_2$. Its boundary is equal to
     \begin{equation}
		\diff(x,y,z) =\begin{cases}
											0, 							& \textrm{if $x$ and $y$ are from the same block},\\
											(y,z)-(f(y),z), & \textrm{otherwise}.
									\end{cases}
	\end{equation}
	The~induced relation only identifies some generators and does not introduce torsion. Namely, we
	can replace $(y,z)$ by any other pair $(y',z)$ with $y'$ from the same orbit as $y$. A~simple
	counting results in the~desired rank of $H_1(Q)$.	
\end{proof}

\begin{remark}
	Homology groups $H_n(Q)$ are usually not free when $n>1$, and the~same holds for their normalized
	versions $\HN_n(Q)$.
\end{remark}

\begin{remark}
	The method of this paper can be applied to the more general case of blocks spindles, even with no
	one-element block $\{b\}$. The proof is, however, much more involved and is postponed for
	future work.
\end{remark}

\subsection*{The degenerate part and growth conjectures}
We can easily compute the~distributive homology for an $f$-spindle $X$ using formula
\eqref{eq:CD-from-CN} from Theorem~\ref{thm:CD-from-CN}. Indeed, \eqref{eq:CD-from-CN} implies
the~following relation
\begin{equation}\label{eq:CD-recursive}
	\CD_{n+1}(X)\cong \CD_n(X)^{\oplus(|X|-1)} \oplus \widetilde C_{n-1}(X)^{\oplus|X|}
\end{equation}
and assuming $n\geqslant 2$, we can combine this with the~formula for the~normalized part from
Theorem~\ref{thm:Hn-for-f-spindle-finite} to obtain an~isomorphism of homology
\begin{equation}\label{eq:H-recursive-Fib}
	H_{n+1}(X) \cong H_n(X)^{\oplus(|X|-1)} \oplus H_{n-1}(X)^{\oplus|X|}, \textrm{ for }n\geqslant 2.
\end{equation}
In the case where $X$ is an~$f$-spindle, one has
\begin{equation}
	\begin{split}
		\rk H_2(X)
				&= \rk\HN_2(X) + \rk\HD_2(X)	\\
				&= \big((\orb(f)-1)|X|+1+\orb(f)\big)|X|	\\
				&= \big((\orb(f)-1)|X_0|+2\orb(f)\big)|X| = |X|\,\rk H_1(X),
	\end{split}
\end{equation}
which implies $H_1(X)^{\oplus|X|} \cong H_2(X)$, resulting in $H_{n+1}(X) \cong H_n(X)^{\oplus|X|}$
for $n\geqslant 1$.

\begin{corollary}\label{cor:distr-homology-whole}
	The~whole distributive homology for an~$f$-spindle $X$ is given by the~formulas
	\begin{equation}
	\setlength\arraycolsep{2pt}
	\left\{\begin{array}{rl}
		\widetilde H_0(X) &= \Z^{\orb(f)},\\
		           H_n(X) &= \left( \Z^{\orb(f)(|X|+1) - (|X|-1)} \oplus \Z_\ell^{\init(f)} \right)^{\oplus |X|^{(n-1)}},
					\quad\textrm{for }n\geqslant 1.
	\end{array}\right.
	\end{equation}
	In particular, $H_{n+1}(X)\cong H_n(X)^{\oplus |X|}$ for $n\geqslant 1$.
\end{corollary}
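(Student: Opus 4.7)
The plan is to assemble the corollary from pieces that are essentially already in hand: Theorem~\ref{thm:Hn-for-f-spindle-finite} provides the normalized part, Theorem~\ref{thm:CD-from-CN} provides the degenerate part, and the paragraph preceding the corollary has already turned the two-step recurrence \eqref{eq:H-recursive-Fib} into a one-step doubling recurrence from degree $1$ onward. So the proof reduces to (i) verifying the base cases and (ii) running a straightforward induction.

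First I would treat degree zero. Since $\CD_0(X)=0$, one has $\Caugm H_0(X)=\Caugm\HN_0(X)=\Z^{\orb(f)}$ directly from Theorem~\ref{thm:Hn-for-f-spindle-finite}, which matches the stated formula.

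Next I would establish the degree one base case. The formula claims $H_1(X)=\Z^{\orb(f)(|X|+1)-(|X|-1)}\oplus\Z_\ell^{\init(f)}$. Since $\HD_1(X)=0$ (by Theorem~\ref{thm:CD-from-CN}), we get $H_1(X)=\HN_1(X)$, and I would simply rewrite the exponent from Theorem~\ref{thm:H1-for-f-spindle} using $|X_0|=|X|-1$:
\begin{equation*}
(\orb(f)-1)|X_0|+2\orb(f) = \orb(f)(|X|+1)-(|X|-1),
\end{equation*}
which gives exactly the right free part, with torsion $\Z_\ell^{\init(f)}$ unchanged.

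For the inductive step I would invoke the already-derived isomorphism $H_{n+1}(X)\cong H_n(X)^{\oplus|X|}$ for $n\geqslant 1$ (equations \eqref{eq:CD-recursive} through the rank computation right above the corollary), which collapses the Fibonacci-type recurrence \eqref{eq:H-recursive-Fib} into pure doubling by $|X|$. An immediate induction then yields $H_n(X)\cong H_1(X)^{\oplus|X|^{n-1}}$ for all $n\geqslant 1$, and substituting the closed form for $H_1(X)$ finishes the proof; the final statement $H_{n+1}(X)\cong H_n(X)^{\oplus|X|}$ is then just a restatement.

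The only place that is not entirely mechanical is the verification that $H_1(X)^{\oplus|X|}\cong H_2(X)$, which collapses the recurrence \eqref{eq:H-recursive-Fib} to geometric growth; but this is exactly the rank computation performed in the displayed equation immediately preceding the corollary, together with the observation that $\HD_2(X)=\widetilde H_0(X)\otimes\Z X$ is free (so the torsion of $H_2(X)$ coincides with that of $\HN_2(X)$, namely $\Z_\ell^{\init(f)|X|}$, matching $|X|$ copies of the torsion in $H_1(X)$). So the main obstacle is not a conceptual one but simply the bookkeeping needed to confirm that ranks and torsion orders on the two sides of $H_1(X)^{\oplus|X|}\cong H_2(X)$ agree; once this is verified, the corollary drops out.
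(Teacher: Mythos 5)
Your proposal is correct and follows essentially the same route as the paper: combine Theorem~\ref{thm:Hn-for-f-spindle-finite} and Theorem~\ref{thm:CD-from-CN} to get the Fibonacci-type recurrence \eqref{eq:H-recursive-Fib}, verify the base case $H_2(X)\cong H_1(X)^{\oplus|X|}$, and then induct to collapse the recurrence into pure geometric growth. If anything, you are slightly more careful than the text: the paper's displayed computation before the corollary only compares ranks, leaving the torsion comparison implicit, whereas you explicitly note that $\HD_2(X)=\widetilde H_0(X)\otimes\Z X$ is free and hence the torsion of $H_2(X)$ equals that of $\HN_2(X)=\Z_\ell^{\init(f)|X|}$, matching $|X|$ copies of $H_1(X)$'s torsion; this observation is needed to upgrade the rank equality to a genuine isomorphism.
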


In \cite{PrzSik-distr-hom} the~following conjecture was stated:

\begin{conjecture}[Rank Growth Conjecture]\label{conj:RGC}
	Let $(X,\opn)$ be a~shelf. Then for $n\geqslant |X|-2$ one has $\rk H_{n+1}(X) = |X|\,\rk H_n(X)$.
\end{conjecture}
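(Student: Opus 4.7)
The approach I would take is to leverage the canonical decomposition $H(X)\cong\HN(X)\oplus\HD(X)$ and the tensor formula of Theorem~\ref{thm:CD-from-CN} to reduce the conjecture, in the spindle case, to a statement about the normalized part alone. Writing $k:=|X|$ and $H(t)$, $\widetilde H(t)$, $\HN(t)$, $\HD(t)$ for the associated Poincar\'e series, the isomorphism $\CD_n(X)=\bigoplus_{p+q=n-2}\widetilde C_p(X)\otimes\CN_q(X)$, with differential only on the first factor, together with $\rk\CN_q(X)=k(k-1)^q$, gives after passing to homology
\begin{equation*}
	\HD(t)=\frac{kt^2}{1-(k-1)t}\,\widetilde H(t).
\end{equation*}
Combining with $H(t)=\HN(t)+\HD(t)$ and $\widetilde H(t)=H(t)-1$ and solving for $H(t)$ yields
\begin{equation*}
	H(t)=\frac{\HN(t)\bigl(1-(k-1)t\bigr)-kt^2}{(1+t)(1-kt)}.
\end{equation*}

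Next, I would observe that the Euler characteristic of $\CN(X)$ is $\sum_{n\geqslant 0}(-1)^n k(k-1)^n = 1$, so $\HN(-1)=1$, and the numerator vanishes at $t=-1$. After canceling $(1+t)$ only the simple pole at $t=1/k$ survives; writing the reduced numerator as $Q(t)$, a straightforward check shows $\rk H_{n+1}(X)=k\,\rk H_n(X)$ holds for all $n\geqslant N$ precisely when $Q(t)$ is a polynomial of degree at most $N$. The conjecture thus reduces to showing that $\HN(t)\bigl(1-(k-1)t\bigr)$ is a polynomial of degree at most $k-1$, equivalently that $\rk\HN_{n+1}(X)=(k-1)\,\rk\HN_n(X)$ for every $n\geqslant k-1$. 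This is highly plausible a priori: the chain groups $\CN_n(X)$ have rank $k(k-1)^n$ and thus already satisfy the desired growth ratio, so the content of the claim is that cycles and boundaries stabilize to fixed proportions of the chain group once the sequence length is large enough. A hoped-for by-product is a conceptual explanation of the threshold $n=|X|-2$ in the conjecture.

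The principal obstacle is establishing this stabilization. One natural line of attack is to build a chain-level prepending operator $\sigma_a(\underline x)=(a,\underline x)$ and study the anticommutator $\partial\sigma_a+\sigma_a\partial$; summed over $a\neq x_0$, this ought to give $(k-1)\cdot\id$ modulo an explicit correction involving the higher faces $d^i$, which one would then need to show becomes null-homotopic on $\CN$ once $n\geqslant k-1$. Whether distributivity and idempotency together suffice to produce such a null-homotopy is unclear, and I expect this is where the conjecture requires a genuinely new idea. For general shelves (not spindles) the normalized/degenerate decomposition is unavailable altogether, so the proposed route would first require passing to a spindle quotient or developing a direct analog of Theorem~\ref{thm:CD-from-CN} for $C(X)$; this is where I expect the conjecture to be hardest, and where the techniques of the present paper no longer apply verbatim.
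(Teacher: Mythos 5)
This statement is a \emph{conjecture}: the paper does not prove it (indeed it records that the authors have tested many spindles without finding a counterexample), and you do not prove it either, as you acknowledge in your final paragraph. So there is no proof in the paper to compare against.

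Your generating-function reduction is, however, worth commenting on because it contains a concrete gap and sits in direct tension with a remark in the paper. From Theorem~\ref{thm:CD-from-CN} one does get, writing $k=|X|$,
\begin{equation*}
	H(t)\,(1+t)(1-kt) \;=\; \HN(t)\bigl(1-(k-1)t\bigr) - kt^2,
\end{equation*}
which is exactly formula~\eqref{eq:H-recursive-Fib} repackaged: the denominator $(1+t)(1-kt)$ encodes the recursion $\rk H_{n+1} = (k-1)\rk H_n + k\,\rk H_{n-1}$, and the paper explicitly states that ``we cannot expect more than formula~\eqref{eq:H-recursive-Fib}'' from this decomposition. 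Upgrading to $\rk H_{n+1}=k\,\rk H_n$ is equivalent to the absence of a pole at $t=-1$, i.e.\ to your claim $\HN(-1)=1$. That claim is where the argument breaks down: the ``Euler characteristic'' $\sum_n(-1)^n k(k-1)^n$ is a divergent series for $|X|\geqslant 3$, and the identity $\chi(C)=\chi(H)$ one would like to invoke is only valid for bounded complexes. Writing $\CN(t)=\HN(t)+(1+t)B(t)$ with $B(t)=\sum_n\rk\partial^N_{n+1}t^n$ shows that $\HN(-1)=\CN(-1)=1$ would follow if $B(t)$ were regular at $t=-1$, but nothing in the proposal establishes that; the $(-1)$-residue can in principle be absorbed by $B$. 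So ``$\HN(-1)=1$'' is an unjustified step, and in fact it is logically equivalent (given NRGC) to RGC for spindles, so assuming it begs the question. If it is true for every finite spindle, that is a substantive statement deserving its own proof, not a consequence of a formal Euler-characteristic computation.

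Two smaller points. First, the proposed prepending operator $\sigma_a$ with $\diff\sigma_a+\sigma_a\diff$ does not straightforwardly produce $(k-1)\cdot\id$ plus a manageable error: the higher face maps $d^i$ with $i\geqslant 2$ interact with the prepended entry through $\opn$ in a way that depends on the shelf structure, and nothing in the sketch explains why idempotency and distributivity should null-homotope that error once $n\geqslant k-1$; in particular the threshold $|X|-2$ remains unexplained. Second, as you note yourself, the whole route is confined to spindles because it hinges on the degenerate/normalized splitting, so even a complete version would not settle the conjecture as stated (which is for shelves).
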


\noindent 
Using formula \eqref{eq:CD-from-CN} for the~degenerate subcomplex one can then show that the~rank
of the~normalized homology grows by a~factor of $|X|-1$, see \cite{PrzPut-degen}.

\begin{conjecture}[Normalized Rank Growth Conjecture]\label{cong:NRGC}
	Let $(X,\opn)$ be a~spindle. Then one has $\rk\HN_{n+1}(X) = (|X|-1)\rk\HN_n(X)$ for $n\geqslant |X|-1$.
\end{conjecture}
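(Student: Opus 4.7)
The plan is to derive the Normalized Rank Growth Conjecture as a formal consequence of the Rank Growth Conjecture, using the canonical splitting $H(X)\cong\HN(X)\oplus\HD(X)$ together with the explicit description of the degenerate part from Theorem~\ref{thm:CD-from-CN}.

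First I would compute the ranks of the degenerate homology groups. Since the differential in the decomposition $\CD_n(X) = \bigoplus_{p+q=n-2}\widetilde C_p(X)\otimes\CN_q(X)$ acts only on the first tensor factor and the groups $\CN_q(X)$ are free abelian, passing to homology gives
\begin{equation*}
	\HD_n(X) \cong \bigoplus_{p+q=n-2}\widetilde H_p(X)\otimes\CN_q(X) \qquad (n\geqslant 2).
\end{equation*}
Counting sequences with no consecutive repetitions yields $\rk\CN_q(X) = |X|(|X|-1)^q$, and peeling off the top summand $q=n-2$ produces the recursion
\begin{equation*}
	\rk\HD_{n+1}(X) = (|X|-1)\rk\HD_n(X) + |X|\rk\widetilde H_{n-1}(X) \qquad (n\geqslant 2).
\end{equation*}

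Next I would combine this with RGC. Subtracting the recursion from the assumed identity $\rk H_{n+1}(X) = |X|\rk H_n(X)$ (valid for $n\geqslant |X|-2$) and using $\rk H_n = \rk\HN_n + \rk\HD_n$, a straightforward manipulation leaves
\begin{equation*}
	\rk\HN_{n+1}(X) - (|X|-1)\rk\HN_n(X) = \rk H_n(X) - |X|\rk\widetilde H_{n-1}(X).
\end{equation*}
For $n\geqslant 2$ the tilde on the right is redundant since $\widetilde H_{n-1} = H_{n-1}$, so the right-hand side is $\rk H_n(X) - |X|\rk H_{n-1}(X)$. By RGC applied at index $n-1$, this vanishes precisely when $n-1\geqslant |X|-2$, i.e.\ $n\geqslant |X|-1$, which is exactly the range of Conjecture~\ref{cong:NRGC}; the degenerate cases $|X|\leqslant 2$ are settled by inspection.

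The real obstacle is of course the Rank Growth Conjecture itself, which remains open for general shelves --- the normalized variant is essentially an algebraic corollary once RGC is in hand. The only subtlety in the argument is the off-by-one at the boundary coming from the $\widetilde H_0$ versus $H_0$ discrepancy, which cleanly explains why NRGC loses one step of validity compared with RGC. Any attack independent of RGC would require a finer understanding of the chain-level splitting behind Theorem~\ref{thm:CD-from-CN} and an extension of the sharp $f$-spindle calculations of Section~\ref{sec:homology} to arbitrary shelves, which would be the truly difficult part.
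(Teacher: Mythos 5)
This statement is a conjecture, not a theorem, so the paper contains no proof of it to compare against. What the paper does say, in the surrounding prose, is that the Rank Growth Conjecture implies this Normalized version (``Conjecture~\ref{conj:RGC} implies Conjecture~\ref{cong:NRGC}, but not the~other way''), deferring the argument to~\cite{PrzPut-degen}. Your proposal is precisely a derivation of that implication, so it matches the paper's intent rather than proving the conjecture outright --- and you are appropriately upfront that RGC itself remains the real obstacle.

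Your bookkeeping is correct. Since the differential in \eqref{eq:CD-from-CN} acts only on the first tensor factor and each $\CN_q(X)$ is free abelian with zero differential, the K\"unneth formula has no Tor terms and gives $\HD_n(X)\cong\bigoplus_{p+q=n-2}\widetilde H_p(X)\otimes\CN_q(X)$; with $\rk\CN_q(X)=|X|(|X|-1)^q$ this yields the recursion $\rk\HD_{n+1}=(|X|-1)\rk\HD_n+|X|\rk\widetilde H_{n-1}$. Feeding this into $\rk H_{n+1}=|X|\rk H_n$ via $\rk H_m=\rk\HN_m+\rk\HD_m$ does leave exactly $\rk\HN_{n+1}-(|X|-1)\rk\HN_n=\rk H_n-|X|\rk\widetilde H_{n-1}$, and applying RGC one index lower kills the right-hand side for $n\geqslant|X|-1$. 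One small caveat: the one-step loss of range in NRGC relative to RGC is driven mainly by the need to apply RGC at index $n-1$, not solely by the $\widetilde H_0$ versus $H_0$ discrepancy (that only bites when $n=1$, i.e.\ $|X|\leqslant 2$, which you correctly set aside as degenerate). With that minor reattribution, your conditional argument is sound and is the same route the paper gestures at.
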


Conjecture~\ref{conj:RGC} implies Conjecture~\ref{cong:NRGC}, but not the~other way. Indeed, we
cannot expect more than formula~\eqref{eq:H-recursive-Fib}. Although the~authors do not know of 
any~example of a~spindle that does not satisfy Conjecture~\ref{conj:RGC}, there are spindles where
$H_{n+1}(X)\not\cong H_n(X)^{\oplus|X|}$ because of torsion.

\begin{example}
	Let $X=\{1,2,3,4\}$ and define $\opn\colon X\times X\to X$ by the~following table:
	\begin{center}
	\begin{tabular}{c|cccc}
		\raisebox{1pt}{$\opn$} & 1 & 2 & 3 & 4 \\
		\hline
		1 & 1 & 2 & 4 & 3 \\
		2 & 1 & 2 & 4 & 3 \\
		3 & 2 & 1 & 3 & 4 \\
		4 & 2 & 1 & 3 & 4
	\end{tabular}
	\end{center}
	Computation on a~computer resulted in the~following groups:
	\begin{align*}
		H_0(X) &= \Z^2,										& H_3(X) &= \Z^{32} \oplus \Z_2^{52},\\
		H_1(X) &= \Z^2 \oplus \Z_2^4,			& H_4(X) &= \Z^{128}\oplus \Z_2^{204},\\
		H_2(X) &= \Z^8 \oplus \Z_2^{12},	& H_5(X) &= \Z^{512}\oplus \Z_2^{820}.
	\end{align*}
	One can easily check that $H_n(X)\cong H_{n-1}(X)^{\oplus 3}\oplus H_{n-2}(X)^{\oplus 4}$ for
	$3\leqslant n\leqslant 5$ and that the~Rank Growth Conjecture holds. However, the~torsion
	subgroup does not grow by the~factor of $4$.
\end{example}

\noindent
This suggests the~following Growth Conjecture for distributive homology, including torsion.

\begin{conjecture}[Growth Conjecture]
	Let $(X,\opn)$ be a~shelf. Then for $n\geqslant |X|-2$ one has
	\begin{equation}
		H_{n+1}(X) \cong H_n(X)^{\oplus(|X|-1)}\oplus H_{n-1}(X)^{\oplus|X|}.
	\end{equation}
	Furthermore, if $X$ is a~spindle, then also $\HN_{n+1}(X)\cong\HN_n(X)^{\oplus(|X|-1)}$.
\end{conjecture}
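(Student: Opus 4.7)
The plan is to split the conjecture into two parts and reduce the spindle version to the normalized statement. For spindles, formula~\eqref{eq:CD-recursive} is an~isomorphism of chain complexes (inherited from Theorem~\ref{thm:CD-from-CN}), and in fact the differential on the right-hand summand $\widetilde C_{n-1}(X)^{\oplus|X|}$ comes only from the first tensor factor. Passing to homology yields
\begin{equation*}
	\HD_{n+1}(X) \cong \HD_n(X)^{\oplus(|X|-1)} \oplus \widetilde H_{n-1}(X)^{\oplus|X|}.
\end{equation*}
Combined with $H(X)=\HN(X)\oplus\HD(X)$, if we can establish the normalized growth $\HN_{n+1}(X)\cong\HN_n(X)^{\oplus(|X|-1)}$ in the stated range, then an induction mirroring the rank computation preceding Corollary~\ref{cor:distr-homology-whole} gives $H_{n+1}(X)\cong H_n(X)^{\oplus(|X|-1)}\oplus H_{n-1}(X)^{\oplus|X|}$. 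Thus the spindle part of the conjecture reduces cleanly to the normalized statement.

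For the normalized statement itself, the strategy is to generalize the methods of Section~\ref{sec:homology}. The crucial input there was the short exact sequence $0\to\CNxx(X)\to\CN(X)\to\CNxy(X)\to 0$ controlled by whether $x_0$ and $x_1$ lie in the same orbit, which via~\eqref{eq:LES-CNxx-CN-CNxy} reduced the homology computation to $\ker\delta_n\oplus\coker\delta_{n+1}$. For a general spindle I would replace ``orbit'' by ``subshelf generated by $x_0,\dots,x_k$'' and build a filtration of $\CN(X)$ of length at most $|X|-1$ (the maximum length of a chain of proper subshelves), inducing a~spectral sequence whose $E_1$-page involves smaller subshelves where one inducts. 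The hope is that for $n\geqslant|X|-1$ the spectral sequence collapses, the factor $|X|-1$ emerging from the $|X|-1$ admissible values of $x_1$ at fixed $x_0$.

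For the general shelf case (no idempotency), no normalized/degenerate splitting is available, so one must work directly with $C(X)$. A natural approach is a mapping cone construction: look for a chain map
\begin{equation*}
	\phi\colon C(X)^{\oplus|X|}[-1] \longrightarrow C(X)^{\oplus(|X|-1)}
\end{equation*}
whose cone is chain-homotopy equivalent to $C(X)$ (perhaps via a prepending map $(\underline x;y)\mapsto(y,\underline x)$ with appropriate sign and correction terms forced by self-distributivity), so that the associated long exact sequence of homology delivers the Fibonacci-type recurrence in the stable range.

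The main obstacle will be the absence of a tractable structural decomposition for arbitrary shelves. For $f$-spindles, the distinguished element $b$ controls the entire homology; in general, the orbit structure under iterated $\opn$-action can be arbitrarily intricate, and a single filtration will almost certainly not suffice. The precise bound $n\geqslant|X|-2$ is itself a strong hint: it suggests a multi-step filtration of length roughly $|X|-1$ whose convergence forces exactly this range, and verifying such convergence, together with identifying the differentials of the resulting spectral sequence in terms of self-distributivity, is where I expect the bulk of the difficulty to lie.
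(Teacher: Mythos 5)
This statement is stated in the paper as a conjecture, not a theorem, and the paper offers no proof of it. The only evidence given is that finite $f$-spindles satisfy it (via Theorem~\ref{thm:Hn-for-f-spindle-finite}) together with computer experiments on block spindles; the authors explicitly note they could not find a counterexample. There is therefore no proof in the paper to compare your proposal against, and any valid proof would be new mathematics beyond the paper.

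That said, your first step is correct and worth recording. From Theorem~\ref{thm:CD-from-CN}, the differential on $\CD_n(X) = \bigoplus_{p+q=n-2}\widetilde C_p(X)\otimes\CN_q(X)$ acts only on the first tensor factor, so
\begin{equation*}
	\HD_n(X) \cong \bigoplus_{p+q=n-2}\widetilde H_p(X)\otimes\CN_q(X),
\end{equation*}
and since $\CN_{q+1}(X)\cong\CN_q(X)^{\oplus(|X|-1)}$ as abelian groups for $q\geqslant 0$, splitting off the $q=0$ summand gives
\begin{equation*}
	\HD_{n+1}(X) \cong \HD_n(X)^{\oplus(|X|-1)}\oplus\widetilde H_{n-1}(X)^{\oplus|X|}.
\end{equation*}
Combined with $H(X)=\HN(X)\oplus\HD(X)$ and $\widetilde H_{n-1}(X)=H_{n-1}(X)$ for $n\geqslant 2$, this shows the full recursion $H_{n+1}(X)\cong H_n(X)^{\oplus(|X|-1)}\oplus H_{n-1}(X)^{\oplus|X|}$ is \emph{equivalent} to the normalized growth $\HN_{n+1}(X)\cong\HN_n(X)^{\oplus(|X|-1)}$, no induction required. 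So your reduction of the spindle case to the normalized statement is sound and essentially formalizes the remark around \eqref{eq:H-recursive-Fib}.

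Everything after that point, however, is aspiration rather than argument. The proposed filtration by subshelves, the expected collapse of the resulting spectral sequence for $n\geqslant|X|-1$, and the mapping-cone construction for general shelves are all unverified; you describe what you hope such devices would do without identifying the differentials, proving convergence, or checking any nontrivial case. You also correctly name the obstruction: the orbit-based splitting $\CNxx\subset\CN$ that powers the $f$-spindle computation in Section~\ref{sec:homology} exploits the distinguished element $b$ and has no evident analogue for arbitrary spindles, let alone shelves. As it stands the proposal leaves the conjecture exactly as open as the paper does.
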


\noindent
Theorem~\ref{thm:Hn-for-f-spindle-finite} shows $f$-spindles satisfy all of these conjectures. Also,
the~authors tested plenty of other block spindles in attempts to find a~counterexample to these
conjectures, but they did not succeed.

\subsection*{Acyclicity results}

Let $X$ be a~shelf and $A\subset X$ a~subset such that $X$ acts on $A$ from the~right by permutations,
i.e. $a\opn x\in A$ whenever $a\in A$ and the~map $a\mapsto a\opn x$ is a~permutation of $A$ for every
$x\in X$. If such an~$A$ exists and is finite, it was proved in~\cite{Prz-distr-survey} that $H(X)$ is
annihilated by $|A|$. It was expected to be trivial as one-term distributive homology was supposed to
be torsion-free. However, we have seen already the~latter is not true and it is no longer obvious why
homology groups of such a~spindle should vanish. We prove this below. To simplify notation, we will
omit $\opn$ and use the~left-first convention for bracketing:
\begin{equation}
	x_1\cdots x_n := ((x_1\opn x_2)\opn\cdots )\opn x_n.
\end{equation}
Distributivity of $\opn$ implies the~generalized distributivity:
$(x_1\cdots x_n)\opn y = (x_1\opn y)\cdots (x_n\opn y)$.

\begin{proposition}\label{prop:div-EX-acyclic}
	Let $(X,\opn)$ be a~shelf with a~subset $A\subset X$ on which $X$ acts from the~right by
	permutations. Then the~complex $\widetilde C(X)$ is acyclic.
\end{proposition}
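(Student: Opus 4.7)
The plan is to construct an explicit contracting homotopy on $\widetilde C(X)$ from the permutations $\sigma_x := (\cdot\opn x)\colon A\to A$ supplied by the hypothesis. Fix any $a_0\in A$ and, for every sequence $\underline x=(x_0,\dots,x_n)$ with $x_i\in X$, set
\begin{equation*}
  a(\underline x) := \sigma_{x_0}^{-1}\sigma_{x_1}^{-1}\cdots\sigma_{x_n}^{-1}(a_0)\in A,
\end{equation*}
with $a(\emptyset):=a_0$. Define $h_n\colon\widetilde C_n(X)\to\widetilde C_{n+1}(X)$ by $h_n(\underline x):=(a(\underline x),x_0,\dots,x_n)$ for $n\geqslant 0$, and $h_{-1}(1):=a_0$.

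Self-distributivity $(a\opn x)\opn y=(a\opn y)\opn (x\opn y)$ restricted to $a\in A$ yields the commutation rule $\sigma_y\sigma_x=\sigma_{x\opn y}\sigma_y$ in the symmetric group of $A$, equivalently $\sigma_{x\opn y}=\sigma_y\sigma_x\sigma_y^{-1}$. The key lemma to establish is the face compatibility
\begin{equation*}
  a(\underline x)\opn x_j \;=\; a(d^j\underline x), \qquad 0\leqslant j\leqslant n.
\end{equation*}
For $j=0$ this is immediate since $\sigma_{x_0}$ cancels the leftmost $\sigma_{x_0}^{-1}$. For $j\geqslant 1$ one substitutes $\sigma^{-1}_{x_i\opn x_j}=\sigma_{x_j}\sigma^{-1}_{x_i}\sigma_{x_j}^{-1}$ into each factor arising from the relabelled entries in $d^j\underline x$; consecutive pairs $\sigma_{x_j}^{-1}\sigma_{x_j}$ telescope, and what remains is exactly $\sigma_{x_j}(a(\underline x))$.

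With this identity, verifying $\diff h+h\diff=\id$ is mechanical: the first face of $h(\underline x)$ returns $\underline x$, while for $i\geqslant 1$ the identity yields $d^i h(\underline x)=(a(\underline x)\opn x_{i-1},x_0\opn x_{i-1},\dots)=h(d^{i-1}\underline x)$. Collecting signs gives $\diff h(\underline x)=\underline x-h(\diff\underline x)$, and in degree $-1$ one simply notes $\diff(a_0)=1$. Hence $\widetilde C(X)$ is contractible, in particular acyclic. The main obstacle is pinning down the correct element to prepend: the naive choice $h(\underline x)=(a_0,\underline x)$ fails because $a_0\opn x_i\neq a_0$ leaves residual boundary terms $h\diff$ cannot absorb, and letting $a(\underline x)$ depend on the whole sequence through the inverse permutations $\sigma_{x_i}^{-1}$ is precisely what turns each higher face of $h(\underline x)$ into $h$ of a face of $\underline x$; the conjugation rule coming from self-distributivity is exactly what makes this bookkeeping close up.
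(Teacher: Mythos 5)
Your proof is correct and takes essentially the same approach as the paper: the paper's element $a_{\underline x}$, defined as the unique solution of $a = a_{\underline x}\opn x_0\opn\cdots\opn x_n$, is precisely your $a(\underline x)=\sigma_{x_0}^{-1}\cdots\sigma_{x_n}^{-1}(a_0)$, and the key identity $a_{\underline x}\opn x_i = a_{d^i\underline x}$ (derived there via generalized distributivity) is your face-compatibility lemma (derived via the conjugation rule $\sigma_{x\opn y}=\sigma_y\sigma_x\sigma_y^{-1}$). The two arguments are the same homotopy in different notation.
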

\begin{proof}
	We will construct a~contracting homotopy $h\colon \widetilde C_n(X)\to \widetilde C_{n+1}(X)$.
	First, notice that for every element $a\in A$ and $x\in X$ we can find a~unique $a'\in A$ such
	that $a=a'\opn x$. More generally, for a~fixed $a\in A$ there is a~unique solution $a_{\underline x}$
	to an~equation $a = a_{\underline x}x_0\cdots x_n$ for any sequence $\underline x=(x_0,\ldots,x_n)$.
	Using the~distributivity of $\opn$ we can transform the~right hand side by moving $x_i$ to the~left,
	which results in the~equality
	\begin{equation}
		a = (a_{\underline x}\opn x_i)\cdots(x_{i-1}\opn x_i)x_{i+1}\cdots x_n.
	\end{equation}
	This means that $a_{\underline x}\opn x_i = a_{d^i\underline x}$ and the~map
	$h(\underline x) := (a_{\underline x},\underline x)$ satisfies
	\begin{equation}
		d^{i+1}h(\underline x) = (a_{\underline x}\opn x_i, d^i\underline x) = h(d^i\underline x)
	\end{equation}
	for every $0\leqslant i\leqslant n$. Hence, $\diff h(\underline x) + h(\diff\underline x) =
	d^0h(\underline x) = \underline x$ and the~identity homomorphism on $\widetilde C(X)$ is
	nullhomotopic.
\end{proof}

\end{document}